\newtheorem{thm}{Theorem}[section]
\newtheorem{thrm}{Theorem}[section]
\newtheorem{deftn}[thrm]{Definition}
\newtheorem{rema}[thm]{Remark}
\newtheorem{corr}[thrm]{Corollary}
\newcommand{\rsts}[2]{\genfrac{\{}{\}}{0pt}{0}{#1}{#2}}
\newcommand{\rstf}[2]{\genfrac{[}{]}{0pt}{0}{#1}{#2}}
\title{\textbf{Higher Order Apostol-Type Poly-Genocchi Polynomials with Parameters $a, b$ and $c$}}
\author{\textbf{Cristina B. Corcino}\\\textbf{Roberto B. Corcino}\\{Research Institute for Computational} {Mathematics and Physics}\\Mathematics Department\\Cebu Normal University\\Cebu City, Philippines
}
\begin{document}
\maketitle
\begin{abstract}
In this paper, a new form of poly-Genocchi polynomials is defined by means of polylogarithm, namely, the Apostol-type poly-Genocchi polynomials of higher order with parameters $a$, $b$ and $c$.  Several properties of these polynomials are established including some recurrence relations and explicit formulas, which express these higher order Apostol-type poly-Genocchi polynomials in terms of Stirling numbers of the second kind, Apostol-type Bernoulli and Frobenius polynomials of higher order. Moreover, certain differential identity is obtained that leads this new form of poly-Genocchi polynomials to be classified as Appell polynomials and, consequently, draw more properties using some theorems on Appell polynomials. Furthermore, a symmetrized generalization of this new form of poly-Genocchi polynomials is introduced that possesses a double generating function. Finally, the type 2 Apostol-poly-Genocchi polynomials with parameters $a$, $b$ and $c$ are defined using the concept of polyexponential function and several identities are derived, two of which show the connections of these polynomials with Stirling numbers of the first kind and the type 2 Apostol-type poly-Bernoulli polynomials.

\bigskip
\noindent \textbf{Keywords}: Genocchi polynomials, Bernoulli polynomials, Frobenius polynomials, Appell polynomials, polylogarithm, polyexponential function, Apostol-type polynomials

\bigskip
\noindent \textbf{2010 Mathematics Subject Classifications}: 11B68, 11B73, 05A15
\end{abstract}

\section{Introduction}
There are several variations of Genocchi numbers that appeared in the literature. These include the Genocchi polynomials and Genocchi polynomials of higher order, which are respectively defined by
\begin{align}
\sum_{n=0}^{\infty}G_n(x)\frac{t^n}{n!}&=\frac{2t}{e^t+1}e^{xt},\;\;\; |t|<\pi,\label{eqn1}\\
\sum_{n=0}^{\infty}G_n^{(k)}(x)\frac{t^n}{n!}&=\left(\frac{2t}{e^t+1}\right)^k e^{xt},\label{eqn2}
\end{align}
and Apostol-Genocchi polynomials, and Apostol-Genocchi polynomials of higher order, which are respectively defined by
\begin{align}
\sum_{n=0}^{\infty}G_n(x,\lambda)\frac{t^n}{n!}&=\frac{2t}{\lambda e^t+1} e^{xt},\label{eqn3}\\
\sum_{n=0}^{\infty}G_n^{(k)}(x,\lambda)\frac{t^n}{n!}&=\left(\frac{2t}{\lambda e^t+1}\right)^k e^{xt},\label{eqn4}
\end{align}
 (see \cite{ref8,ref9,ref36,ref37,ref46-1,ref47,ref49}). Another variation of Genocchi numbers, also known as poly-Genocchi polynomials, was introduced by Kim et al. \cite{ref46} using the concept of $kth$ polylogarithm, denoted by ${\rm Li}_k(z)$, which is given by
\begin{equation}\label{polylog}
{\rm Li}_k(z)=\sum_{n=0}^{\infty}\frac{z^n}{n^k}, k\in\mathbb{Z}.
\end{equation}
More precisely, poly-Genocchi polynomials are defined as follows
\begin{equation}\label{polyGeno1}
		\sum_{n=0}^{\infty}G_n^{(k)}(x)\frac{x^n}{n!}=\frac{2Li_{k}(1-e^{t})}{e^{t}+1}e^{xt}.
\end{equation}
Moreover, a modified poly-Genocchi polynomials, denoted by $G_{n,2}^{(k)}(x)$, were defined by Kim et al. \cite{ref46} as follows
\begin{equation}\label{polyGeno2}
		\sum_{n=0}^{\infty}G_{n,2}^{(k)}(x)\frac{x^n}{n!}=\frac{Li_{k}(1-e^{-2t})}{e^{t}+1}e^{xt}.
\end{equation} 
Note that, when $k=1$, equations \eqref{polyGeno1} and \eqref{polyGeno2} give the Genocchi polynomials in \eqref{eqn1}. That is, 
$$G_n^{(1)}(x)=G_{n,2}^{(1)}(x)=G_{n}(x).$$
Kim et. al \cite{ref46} obtained several properties of these polynomials. 

\smallskip
On the other hand, Kurt \cite{ref51} defined two forms of generalized poly-Genocchi polynomials with parameters $a$, $b$, and $c$, as follows
\begin{align}
		\frac{2Li_{k}(1-(ab)^{-t})}{a^{-t}+b^{t}}e^{xt}&=\sum_{n=0}^{\infty}G_n^{(k)}(x;a,b,c)\frac{x^n}{n!}\label{polyGeno11}\\
		\frac{2Li_{k}(1-(ab)^{-2t})}{a^{-t}+b^{t}}e^{xt}&=\sum_{n=0}^{\infty}G_{n,2}^{(k)}(x;a,b,c)\frac{x^n}{n!},\label{polyGeno22}
\end{align}
which are motivated by the definitions in \eqref{polyGeno1} and \eqref{polyGeno2}, respectively. Kurt \cite{ref51} also derived several properties parallel to those of poly-Genocchi polynomials by Kim et al. \cite{ref46}. Note that, when $x=0$, \eqref{polyGeno1} reduces to
\begin{equation}\label{polyGenonum}
		\frac{2Li_{k}(1-e^{t})}{e^{t}+1}=\sum_{n=0}^{\infty}G_n^{(k)}\frac{x^n}{n!}.
\end{equation}
where $G_n^{(k)}$ are called the poly-Genocchi numbers. 

\smallskip
In this paper, a new variation of poly-Genocchi polynomials with parameters $a$, $b$ and $c$, namely, the Apostol-type poly-Genocchi polynomials of higher order with parameters $a, b$ and $c$, will be investigated. Sections 2 and 3 provide the definition of this new variation of poly-Genocchi polynomials, some special cases and their relations with some Genocchi-type polynomials. Section 4 devotes its discussion on some identities that link the Apostol-type poly-Genocchi polynomials of higher order with parameters $a, b$ and $c$ to Appell polynomials. Section 5 focuses on the connections of these higher order Apostol-type polynomials to Stirling numbers of the second kind and different variations of higher order Bernoulli-type polynomials. Section 6 demonstrates other form of generalization of these higher order Apostol-type polynomials, the symmetrized generalization. Section 7 introduces type 2 Apostol-poly-Genocchi polynomials with parameters $a, b$ and $c$ using the concept of polyexponential function \cite{ref45-1}. Section 8 contains the conclusion of the paper.

\section{Definition}

Now, a new variation of poly-Genocchi polynomials, the Apostol-Type poly-Genocchi polynomials of higher order with parameters $a$ $b$ and $c$ can be introduced.
\begin{deftn}\rm The Apostol-type poly-Genocchi polynomials of higher order with parameters $a, b$ and $c$, denoted by $\mathcal{G}_n^{(k,\alpha)}(x; \lambda, a, b, c)$, are defined as follows
\begin{equation}\label{eq711}
\sum_{n=0}^{\infty}\mathcal{G}_n^{(k,\alpha)}(x; \lambda, a, b, c)\frac{t^n}{n!}=\left(\frac{Li_{k}(1-(ab)^{-2t})}{a^{-t}+\lambda b^t}\right)^{\alpha}c^{xt},\;\;\; |t| < \frac{\sqrt{(\ln \lambda)^2+\pi^2}}{|\ln a + \ln b|}.
\end{equation}
When $\alpha=1$, \eqref{eq711} yields
\begin{equation}\label{eq711-1}
\sum_{n=0}^{\infty}\mathcal{G}_n^{(k)}(x; \lambda, a, b, c)\frac{t^n}{n!}=\frac{Li_{k}(1-(ab)^{-2t})}{a^{-t}+\lambda b^t}c^{xt},\;\;\; |t| < \frac{\sqrt{(\ln \lambda)^2+\pi^2}}{|\ln a + \ln b|},
\end{equation}
where $\mathcal{G}_n^{(k)}(x; \lambda, a, b, c)=\mathcal{G}_n^{(k,1)}(x; \lambda, a, b, c)$ denotes the Apostol-type poly-Genocchi polynomials with parameters $a, b$ and $c$.
\end{deftn}

\smallskip
The following are some special cases of the Apostol-type poly-Genocchi polynomials of higher order with parameters $a, b$ and $c$:
\begin{enumerate}
\item When $c=e$, equation \eqref{eq711} reduces to
\begin{equation}\label{eq13}
\sum_{n=0}^{\infty}\mathcal{G}_n^{(k,\alpha)}(x;\lambda, a, b, e) \frac{t^n}{n!}=\left(\frac{Li_{k}(1-(ab)^{-2t})}{a^{-t}+\lambda b^{t}}\right)^{\alpha} e^{xt}.
\end{equation}
For convenience, we use $\mathcal{G}_n^{(k,\alpha)}(x; \lambda, a, b)$ to denote $\mathcal{G}_n^{(k,\alpha)}(x; \lambda, a, b, e)$. That is,
\begin{equation}\label{eq-2}
\sum_{n=0}^{\infty}\mathcal{G}_n^{(k,\alpha)}(x; \lambda, a, b) \frac{t^n}{n!}=\left(\frac{Li_{k}(1-(ab)^{-2t})}{a^{-t}+\lambda b^{t}}\right)^{\alpha} e^{xt}.
\end{equation}
\item When $k=1$, \eqref{eq711} yields
\begin{equation}\label{eqq711}
\sum_{n=0}^{\infty}\mathcal{G}_n^{(\alpha)}(x; \lambda, a, b, c)\frac{t^n}{n!}=\left(\frac{2t\ln ab}{a^{-t}+\lambda b^t}\right)^{\alpha}c^{xt},\;\;\; |t| < \frac{\sqrt{(\ln \lambda)^2+\pi^2}}{|\ln a + \ln b|},
\end{equation}
where the polynomials $\mathcal{G}_n^{(\alpha)}(x; \lambda, a, b, c)=\mathcal{G}_n^{(1,\alpha)}(x; \lambda, a, b, c)$ are called the Apostol-type Genocchi polynomials of higher order with parameters $a$ $b$ and $c$. When $\alpha=1$, \eqref{eqq711} yields
\begin{equation}\label{eqq711-1}
\sum_{n=0}^{\infty}\mathcal{G}_n^{(1)}(x; \lambda, a, b, c)\frac{t^n}{n!}=\frac{2t\ln ab}{a^{-t}+\lambda b^t}c^{xt},\;\;\; |t| < \frac{\sqrt{(\ln \lambda)^2+\pi^2}}{|\ln a + \ln b|},
\end{equation}
where the polynomials $\mathcal{G}_n^{(1)}(x; \lambda, a, b, c)$ are called the Apostol-type Genocchi polynomials with parameters $a$ $b$ and $c$. 
\item When $a=1,b=e$, $\eqref{eq-2}$ will reduce to
\begin{equation}
\sum_{n=0}^{\infty}\mathcal{G}_n^{(k,\alpha)}(x;\lambda, 1, e) \frac{t^n}{n!}=\left(\frac{Li_{k}(1-e^{-2t})}{1+\lambda e^{t}}\right)^{\alpha} e^{xt}.\label{ApostolGenoPoly}
\end{equation}
We may use the notations 
$$\mathcal{G}_n^{(k,\alpha)}(x;\lambda)=\mathcal{G}_n^{(k,\alpha)}(x;\lambda, 1, e)\;\;\mbox{and}\;\; \mathcal{G}_n^{(k)}(x;\lambda)=\mathcal{G}_n^{(k)}(x;\lambda, 1, e)$$ 
and call them Apostol-type poly-Genocchi polynomials of higher order and Apostol-type poly-Genocchi polynomials, respectively. 
\item When $\lambda=1$, \eqref{ApostolGenoPoly} gives 
\begin{equation}
\sum_{n=0}^{\infty}\mathcal{G}_n^{(k,\alpha)}(x;1) \frac{t^n}{n!}=\left(\frac{Li_{k}(1-e^{-2t})}{1+e^{t}}\right)^{\alpha} e^{xt}.\label{ApostolGenPoly}
\end{equation}
which is the higher order version of equation \eqref{polyGeno2}, i.e. the higher order version of the modified poly-Genocchi polynomials of Kim et al.\cite{ref46}. We may use $G_{n,2}^{(k,\alpha)}(x)$ to denote $\mathcal{G}_n^{(k,\alpha)}(x;1)$. 
\item Using the fact that 
$$Li_1(z)=-\ln(1-z),$$
when $k=1$, \eqref{ApostolGenoPoly} gives 
\begin{equation}
\sum_{n=0}^{\infty}\mathcal{G}_n^{(1,\alpha)}(x;\lambda) \frac{t^n}{n!}=\left(\frac{2t}{1+\lambda e^{t}}\right)^{\alpha} e^{xt},\label{ApostolGenPoly1}
\end{equation}
and when $\lambda=1$, \eqref{ApostolGenPoly1} gives 
\begin{equation*}
\sum_{n=0}^{\infty}\mathcal{G}_n^{(1,\alpha)}(x;1) \frac{t^n}{n!}=\left(\frac{2t}{1+e^{t}}\right)^{\alpha} e^{xt}.
\end{equation*}
where $\mathcal{G}_n^{(1,\alpha)}(x;\lambda)$ and $\mathcal{G}_n^{(1,\alpha)}(x;1)$ are exactly the Genocchi polynomials $G_n^{(k)}(x)$ and Apostol-Genocchi polynomials $G_n^{(k)}(x,\lambda)$ of higher order in \eqref{eqn4} and \eqref{eqn2}, respectively. 
\end{enumerate}

\section{Relations with Some Genocchi-type Polynomials}

In this section, some relations for $\mathcal{G}_n^{(k,\alpha)}(x;\lambda,a,b,c)$ will be established which are expressed in terms of some Genocchi-type polynomials.

\smallskip
First, certain recurrence relation for $\mathcal{G}_n^{(k,\alpha)}(x;\lambda,a,b,c)$ will be established.

\begin{thrm} The Apostol-type poly-Genocchi polynomials of higher order with parameters $a, b, c$ satisfy the recurrence relation
\begin{equation}\label{eq33}
\mathcal{G}_n^{(k,\alpha)}(x+1;\lambda,a,b,c)=\sum_{r=0}^n\binom{n}{r}(\ln c)^r\mathcal{G}_{n-r}^{(k,\alpha)}(x;\lambda,a,b,c).
\end{equation}
\end{thrm}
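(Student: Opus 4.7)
The plan is to work directly from the generating function definition in \eqref{eq711} and use the simple algebraic identity $c^{(x+1)t}=c^{xt}\cdot c^{t}$ to factor out an extra exponential.

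First I would write
\[
\sum_{n=0}^{\infty}\mathcal{G}_n^{(k,\alpha)}(x+1;\lambda,a,b,c)\frac{t^n}{n!}
=\left(\frac{Li_{k}(1-(ab)^{-2t})}{a^{-t}+\lambda b^t}\right)^{\alpha}c^{xt}\cdot c^{t},
\]
then recognize the first two factors as the generating function for $\mathcal{G}_{n}^{(k,\alpha)}(x;\lambda,a,b,c)$, and expand the remaining factor as the Taylor series $c^{t}=e^{t\ln c}=\sum_{r=0}^{\infty}(\ln c)^{r}\,t^{r}/r!$.

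Next, I would multiply these two power series using the Cauchy product, so that
\[
\sum_{n=0}^{\infty}\mathcal{G}_n^{(k,\alpha)}(x+1;\lambda,a,b,c)\frac{t^n}{n!}
=\sum_{n=0}^{\infty}\left(\sum_{r=0}^{n}\binom{n}{r}(\ln c)^{r}\mathcal{G}_{n-r}^{(k,\alpha)}(x;\lambda,a,b,c)\right)\frac{t^{n}}{n!}.
\]
Comparing the coefficients of $t^{n}/n!$ on both sides then yields the claimed identity \eqref{eq33}.

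There is no real obstacle here: the argument is a direct generating-function manipulation, and the only thing to watch is that the shift $x\mapsto x+1$ only affects the $c^{xt}$ factor (since the $Li_k$ and denominator pieces do not involve $x$), so the ``Apostol-type poly-Genocchi'' part of the generating function is genuinely reproduced after the factorization. The proof therefore amounts to a one-line generating-function computation followed by a coefficient comparison.
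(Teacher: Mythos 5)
Your proposal is correct and follows exactly the paper's own argument: factor $c^{(x+1)t}=c^{xt}c^{t}$, expand $c^{t}=e^{t\ln c}$ as a power series, form the Cauchy product with the generating function of $\mathcal{G}_n^{(k,\alpha)}(x;\lambda,a,b,c)$, and compare coefficients of $t^n/n!$. Nothing is missing.
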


\begin{proof} Equation \eqref{eq711} can be written as
\begin{align*}
\sum_{n=0}^{\infty}\mathcal{G}_n^{(k,\alpha)}(x+1;\lambda,a,b,c) \frac{t^n}{n!}&=\left(\frac{Li_{k}(1-(ab)^{-2t})}{a^{-t}+\lambda b^{t}}\right)^{\alpha} e^{xt\ln c}e^{t\ln c}\\
&=\left\{\sum_{n=0}^{\infty}\mathcal{G}_n^{(k,\alpha)}(x;\lambda,a,b,c) \frac{t^n}{n!}\right\}\left\{\sum_{n=0}^{\infty}\frac{(t\ln c)^n}{n!}\right\}\\
&=\sum_{n=0}^{\infty}\sum_{r=0}^n\mathcal{G}_{n-r}^{(k,\alpha)}(x;\lambda,a,b,c)\frac{t^{n-r}}{(n-r)!}\frac{(\ln c)^rt^r}{r!}\\
&=\sum_{n=0}^{\infty}\left\{\sum_{r=0}^n\binom{n}{r}(\ln c)^r\mathcal{G}_{n-r}^{(k,\alpha)}(x;\lambda,a,b,c)\right\}\frac{t^n}{n!}.
\end{align*}
Comparing the coefficients of $\frac{t^n}{n!}$ completes the proof of the theorem.
\end{proof}

\smallskip
Consider a special case of \eqref{eq-2} by taking $x=0$. This gives
\begin{equation}\label{polyGenocchiNumbers}
\sum_{n=0}^{\infty}\mathcal{G}_n^{(k,\alpha)}(0;\lambda, a, b) \frac{t^n}{n!}=\left(\frac{Li_{k}(1-(ab)^{-2t})}{a^{-t}+\lambda b^{t}}\right)^{\alpha}.
\end{equation}
We use the notation $\mathcal{G}_{i}^{(k,\alpha)}(\lambda,a,b)=\mathcal{G}_{i}^{(k,\alpha)}(0;\lambda,a,b)$ and call them the Apostol-type poly-Genocchi numbers of higher order with parameters $a$ and $b$. The following theorem contains an identity that expresses $\mathcal{G}_n^{(k,\alpha)}(x;\lambda, a, b, c)$ as polynomial in $x$, which involves $\mathcal{G}_{i}^{(k,\alpha)}(\lambda,a,b)$ as coefficients.
	
\smallskip
\begin{thrm}\label{thm32} The Apostol-type poly-Genocchi polynomials of higher order with parameters $a, b, c$ satisfy the relation, 
\begin{equation}\label{eq9}
 \mathcal{G}_n^{(k,\alpha)}(x;\lambda, a, b, c)=\sum_{i=0}^{n} \binom {n}{i} (\ln c)^{n-i} \mathcal{G}_{i}^{(k,\alpha)}(\lambda,a,b) x^{n-i}.
\end{equation}
\end{thrm}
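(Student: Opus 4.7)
The plan is to prove Theorem \ref{thm32} by factoring the generating function \eqref{eq711} into two series whose Cauchy product yields the claimed identity. The crucial observation is that the only $x$-dependence in the right-hand side of \eqref{eq711} sits in the factor $c^{xt} = e^{xt\ln c}$, while the remaining factor is precisely the generating function \eqref{polyGenocchiNumbers} for the Apostol-type poly-Genocchi numbers $\mathcal{G}_i^{(k,\alpha)}(\lambda,a,b)$. This factorization is the engine of the proof.

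First I would rewrite the right-hand side of \eqref{eq711} as
\[
\left(\frac{Li_{k}(1-(ab)^{-2t})}{a^{-t}+\lambda b^{t}}\right)^{\alpha}\cdot e^{xt\ln c}
=\left(\sum_{i=0}^{\infty}\mathcal{G}_i^{(k,\alpha)}(\lambda,a,b)\frac{t^i}{i!}\right)\left(\sum_{j=0}^{\infty}\frac{(x\ln c)^j\,t^j}{j!}\right),
\]
invoking \eqref{polyGenocchiNumbers} for the first factor and the usual exponential series for the second. Next, I would form the Cauchy product, collecting the coefficient of $t^n/n!$. Writing $j=n-i$ and using $(x\ln c)^{n-i}=(\ln c)^{n-i}x^{n-i}$, the coefficient becomes
\[
\sum_{i=0}^{n}\binom{n}{i}(\ln c)^{n-i}\mathcal{G}_i^{(k,\alpha)}(\lambda,a,b)\,x^{n-i}.
\]
Finally, comparing this with the coefficient of $t^n/n!$ on the left-hand side of \eqref{eq711} and invoking the uniqueness of power series coefficients yields \eqref{eq9}.

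There is essentially no technical obstacle: the argument is a direct generating-function manipulation analogous to the one used for Theorem 3.1, and all convergence issues are guaranteed by the radius-of-convergence condition stated in \eqref{eq711}. The only point requiring care is bookkeeping in the Cauchy product — specifically, ensuring that the binomial coefficient $\binom{n}{i}$ arises correctly from the combination $\frac{1}{i!(n-i)!}\cdot n!$ — and noting the harmless rewriting $(x\ln c)^{n-i}=(\ln c)^{n-i}x^{n-i}$ that puts the identity into the exact form stated in the theorem.
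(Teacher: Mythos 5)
Your proposal is correct and follows essentially the same route as the paper's own proof: both factor the generating function \eqref{eq711} as $e^{xt\ln c}$ times the number-generating series \eqref{polyGenocchiNumbers}, take the Cauchy product, and compare coefficients of $t^n/n!$. No gaps; the bookkeeping you flag (the emergence of $\binom{n}{i}$ and the split $(x\ln c)^{n-i}=(\ln c)^{n-i}x^{n-i}$) is handled identically in the paper.
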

  
\begin{proof} 
Equation \eqref{eq711} can be written as 
\begin{align*}
\sum_{n=0}^{\infty} \mathcal{G}_n^{(k,\alpha)}(x; a, b, c)\dfrac{t^n}{n!} &=\left(\dfrac{Li_{k}(1-(ab)^{-2t})}{a^{-t}+\lambda b^t}\right)^{\alpha}c^{xt} =e^{xt \ln c} \sum_{n=0}^{\infty}\mathcal{G}_n^{(k,\alpha)}(\lambda,a,b)\dfrac{t^n}{n!} \\   
&=\sum_{n=0}^{\infty} \sum_{i=0}^{\infty} \dfrac{(xt \ln c)^{n-i}}{(n-i)!} \mathcal{G}_i^{(k,\alpha)} (\lambda,a,b) \dfrac {t^i}{i!} \\ 
&=\sum_{n=0}^{\infty} \left(\ \sum_{i=0}^{\infty} \binom{n}{i} (\ln c)^{n-i} \mathcal{G}_i^{(k,\alpha)} (\lambda,a,b) x^{n-i} \right)\ \frac {t^n}{n!} 
\end{align*}
Comparing the coefficients of $\frac{t^n}{n!}$, we obtain the desired result. 
\end{proof} 

The next identity gives the relation between $\mathcal{G}_n^{(k,\alpha)}(x;\lambda, a, b, c)$ and $\mathcal{G}_n^{(k,\alpha)} (x;\lambda)$.
 
\begin{thrm}\label{thm33} The Apostol-type poly-Genocchi polynomials of higher order with parameters $a, b, c$ satisfy the relation, 
\begin{equation}\label{eq10}
 \mathcal{G}_n^{(k,\alpha)}(x;\lambda, a, b, c)=(\ln a + \ln b)^{n} \mathcal{G}_n^{(k,\alpha)} \left(\ \frac{x \ln c + \alpha\ln a}{\ln a + \ln b};\lambda  \right).
\end{equation}
\end{thrm}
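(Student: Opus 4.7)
The strategy is to rewrite the generating function in \eqref{eq711} so that, after a linear change of the series variable, it becomes exactly the generating function in \eqref{ApostolGenoPoly} for the ``plain'' Apostol-type poly-Genocchi polynomials $\mathcal{G}_n^{(k,\alpha)}(\cdot;\lambda)$; the identity \eqref{eq10} then pops out by matching coefficients of $t^n/n!$.

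First I would eliminate $a$ from the denominator of the quotient in \eqref{eq711} by multiplying numerator and denominator by $a^t = e^{t\ln a}$. This turns the denominator into $1+\lambda(ab)^t = 1+\lambda e^{t(\ln a+\ln b)}$, while the $\alpha$-th power pulls out the factor $(a^t)^{\alpha} = e^{\alpha t\ln a}$. Absorbing it into $c^{xt}=e^{xt\ln c}$ yields a single exponential prefactor $e^{t(x\ln c+\alpha\ln a)}$, so the left-hand side of \eqref{eq711} is rewritten as
$$e^{t(x\ln c+\alpha\ln a)}\left(\frac{Li_k(1-(ab)^{-2t})}{1+\lambda(ab)^t}\right)^{\alpha}.$$

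Next I would introduce $s = t(\ln a+\ln b)$, so that $(ab)^t = e^s$ and $(ab)^{-2t} = e^{-2s}$. The quotient then collapses to the kernel $Li_k(1-e^{-2s})/(1+\lambda e^s)$ of \eqref{ApostolGenoPoly}, while the outer exponential becomes $e^{s\,(x\ln c+\alpha\ln a)/(\ln a+\ln b)}$. Thus the whole expression equals the generating function in the variable $s$ of the shifted polynomial $\mathcal{G}_n^{(k,\alpha)}\bigl(\tfrac{x\ln c+\alpha\ln a}{\ln a+\ln b};\lambda\bigr)$.

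Finally, reinstating $s=t(\ln a+\ln b)$ inside that series rewrites $s^n/n!$ as $(\ln a+\ln b)^n\,t^n/n!$, and matching the coefficient of $t^n/n!$ with the defining series of $\mathcal{G}_n^{(k,\alpha)}(x;\lambda,a,b,c)$ produces \eqref{eq10}. The argument is essentially bookkeeping; the only care required is to keep track of the $(a^t)^{\alpha}$ factor that is pulled out of the $\alpha$-th power and merged into $c^{xt}$ \emph{before} the rescaling in $s$, so that the $\alpha\ln a$ contribution lands correctly in the numerator of the shifted first argument.
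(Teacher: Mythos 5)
Your proposal is correct and is essentially the paper's own proof: the paper likewise factors $a^{-t}+\lambda b^t=a^{-t}(1+\lambda(ab)^t)$, absorbs the resulting $e^{\alpha t\ln a}$ into $c^{xt}$, and reads off the coefficients of the generating function \eqref{ApostolGenoPoly} evaluated at $t\ln ab$ (your substitution $s=t(\ln a+\ln b)$ is just an explicit naming of that step). No gaps; nothing further to add.
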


\begin{proof}

Using \eqref{eq711}, we have

\begin{align*}
\sum_{n=0}^{\infty} \mathcal{G}_n^{(k,\alpha)}(x;\lambda, a, b, c)\frac{t^n}{n!} &=\left(\frac{Li_{k}(1-(ab)^{-2t})}{a^{-t}(1+\lambda (ab)^t)}\right)^{\alpha}e^{xt \ln c} \\   
&= e^{\frac{x \ln c +\alpha\ln a}{\ln ab}t \ln ab} \left(\frac{Li_{k}(1-e^{-2t \ln ab})}{1+\lambda e^{t \ln ab}}\right)^{\alpha}  \\ 
&=\sum_{n=0}^{\infty} (\ln a + \ln b)^{n} \mathcal{G}_n^{(k,\alpha)} \left(\ \frac {x \ln c + \alpha\ln a}{\ln a + \ln b};\lambda  \right)\ \frac {t^n}{n!}. 
\end{align*}

\noindent Comparing the coefficients of $\frac{t^n}{n!}$, we obtain the desired result. 
\end{proof} 

\section{Classification as Appell Polynomials}

\smallskip
The following theorem contains a differential identity that can be used to classify Apostol-type poly-Genocchi polynomials as Appell polynomials \cite{ref51-1, ref57-1, ref57-2}

\begin{thrm}\label{thm34} The Apostol-type poly-Genocchi polynomials with parameters $a, b, c$ satisfy the relation, 
\begin{equation}\label{eq11}
 \frac {d}{dx} \mathcal{G}_{n+1}^{(k,\alpha)}(x;\lambda, a, b, c)= (n+1)(\ln c) \mathcal{G}_n^{(k,\alpha)} (x;\lambda,a,b,c).
\end{equation}
\end{thrm}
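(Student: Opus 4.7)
The natural approach is to differentiate the defining generating function \eqref{eq711} with respect to $x$ and match coefficients. Since the only $x$-dependence on the right-hand side of \eqref{eq711} sits in the factor $c^{xt}=e^{xt\ln c}$, applying $d/dx$ to both sides produces a factor of $t\ln c$:
\begin{equation*}
\sum_{n=0}^{\infty}\frac{d}{dx}\mathcal{G}_n^{(k,\alpha)}(x;\lambda,a,b,c)\,\frac{t^n}{n!}
=(\ln c)\,t\left(\frac{\mathrm{Li}_k(1-(ab)^{-2t})}{a^{-t}+\lambda b^t}\right)^{\alpha}c^{xt}.
\end{equation*}

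The next step is to re-expand the right-hand side using \eqref{eq711} itself and shift the summation index. Writing $t\cdot \sum_{n\geq 0}\mathcal{G}_n^{(k,\alpha)}(x;\lambda,a,b,c)\,t^n/n!$ as $\sum_{n\geq 0}(n+1)\mathcal{G}_n^{(k,\alpha)}(x;\lambda,a,b,c)\,t^{n+1}/(n+1)!$ and then comparing coefficients of $t^{n+1}/(n+1)!$ on both sides yields \eqref{eq11} immediately.

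There is no real obstacle here; the only thing to be careful about is the index shift after multiplication by $t$, and the placement of the factor $\ln c$ coming from the chain rule $\frac{d}{dx}c^{xt}=t(\ln c)c^{xt}$. As a sanity check one can also verify the identity directly from Theorem \ref{thm32}: differentiating
\begin{equation*}
\mathcal{G}_{n+1}^{(k,\alpha)}(x;\lambda,a,b,c)=\sum_{i=0}^{n+1}\binom{n+1}{i}(\ln c)^{n+1-i}\mathcal{G}_i^{(k,\alpha)}(\lambda,a,b)\,x^{n+1-i}
\end{equation*}
termwise in $x$ and using the identity $\binom{n+1}{i}(n+1-i)=(n+1)\binom{n}{i}$ recovers $(n+1)(\ln c)\mathcal{G}_n^{(k,\alpha)}(x;\lambda,a,b,c)$ through the same expansion. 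I would present the generating-function argument as the primary proof since it is cleaner and avoids bookkeeping with binomial identities.
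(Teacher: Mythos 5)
Your argument is correct and is essentially identical to the paper's own proof: both differentiate the generating function \eqref{eq711} in $x$ to pull out a factor $t\ln c$, re-express the right-hand side via \eqref{eq711}, shift the index, and compare coefficients. The additional sanity check via Theorem \ref{thm32} is fine but not needed.
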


\begin{proof}
Applying the first derivative to equation \eqref{eq711}, we have
\begin{align*}
\sum_{n=0}^{\infty} \frac{d}{dx} \mathcal{G}_n^{(k,\alpha)}(x;\lambda, a, b, c)\frac{t^n}{n!} &= t(\ln c) \left(\frac{Li_{k} (1-(ab)^{-2t})}{(a^{-t}+\lambda b^{t})}\right)^{\alpha} e ^{xt \ln c} \\      
\sum_{n=0}^{\infty} \frac{d}{dx} \mathcal{G}_n^{(k,\alpha)}(x;\lambda, a, b, c)\frac{t^{n-1}}{n!} &= \sum_{n=0}^{\infty} (\ln c) \mathcal{G}_n^{(k,\alpha)}(x;\lambda, a, b, c)\frac{t^n}{n!} 
\end{align*}

Hence, 
$$\sum_{n=0}^{\infty} \frac{1}{n+1} \frac{d}{dx} \mathcal{G}_{n+1}^{(k,\alpha)}(x;\lambda, a, b, c) \frac{t^n}{n!} = \sum_{n=0}^{\infty} (\ln c) \mathcal{G}_n^{(k,\alpha)}(x;\lambda, a, b, c)\frac{t^n}{n!}$$.

\noindent Comparing the coefficients of $\frac{t^n}{n!}$, we obtain the desired result. 
\end{proof} 

\begin{rema}\rm
When $c=e$, equation \eqref{eq11} reduces to
\begin{equation}\label{eq111}
 \frac {d}{dx} \mathcal{G}_{n+1}^{(k,\alpha)}(x;\lambda, a, b)= (n+1) \mathcal{G}_n^{(k,\alpha)} (x;\lambda,a,b),
\end{equation}
which is one of the property for the polynomial to be classified as Appell polynomial. 
\end{rema}

\smallskip
Being classified as Appell polynomials, the generalized poly-Genocchi polynomials $\mathcal{G}^{(k)}_{n}(x;a,b)$ must possess the following properties
\begin{align*}
\mathcal{G}^{(k,\alpha)}_{n}(x;\lambda,a,b)&=\sum_{i=0}^n\binom{n}{i}c_ix^{n-i}\\
\mathcal{G}^{(k,\alpha)}_{n}(x;\lambda,a,b)&=\left(\sum_{i=0}^{\infty}\frac{c_i}{i!}D^i\right)x^{n},
\end{align*}
for some scalar $c_i\neq0$. It is then necessary to find the sequence $\{c_n\}$. However, by using \eqref{eq9} with $c=e$, $c_i=\mathcal{G}^{(k,\alpha)}_i(\lambda,a,b)$. This implies the following corollary.

\bigskip
\begin{corr}\label{genthm33} 
The Apostol-type poly-Genocchi polynomials with parameters $a, b, c$ satisfy the formula,
\begin{equation*}
\mathcal{G}^{(k,\alpha)}_{n}(x;\lambda,a,b)=\left(\sum_{i=0}^{\infty}\frac{\mathcal{G}^{(k,\alpha)}_i(\lambda,a,b)}{i!}D^i\right)x^{n}.
\end{equation*}
\end{corr}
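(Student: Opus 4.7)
The plan is to deduce the corollary directly from Theorem \ref{thm32} specialized to $c=e$. Setting $c=e$ in \eqref{eq9} makes $\ln c = 1$, so the explicit polynomial representation collapses to
$$\mathcal{G}_n^{(k,\alpha)}(x;\lambda, a, b) = \sum_{i=0}^{n} \binom{n}{i}\, \mathcal{G}_i^{(k,\alpha)}(\lambda,a,b)\, x^{n-i}.$$
This is already the content of the corollary, only with the right-hand side expressed through binomial coefficients rather than as a differential operator acting on $x^n$. The task therefore reduces to translating between the two presentations.

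The translation is routine. I would apply the operator term by term, using the elementary fact that $D^i x^n = \frac{n!}{(n-i)!}\, x^{n-i}$ for $i \le n$ and $D^i x^n = 0$ for $i > n$. The infinite series defining the operator truncates to a finite sum when evaluated at $x^n$, and the prefactor $\frac{1}{i!}\cdot\frac{n!}{(n-i)!}$ collapses to $\binom{n}{i}$, which exactly recovers the formula above. A term-by-term comparison then finishes the argument.

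The only conceptual point worth commenting on is that the operator $\sum_{i=0}^{\infty}\frac{\mathcal{G}^{(k,\alpha)}_i(\lambda,a,b)}{i!}D^i$ is formally an infinite series but behaves as a finite-order differential operator on any polynomial, so no convergence questions arise when it is applied to $x^n$. This is precisely the characteristic feature of Appell-type operators highlighted in the discussion preceding the corollary, and it is what makes the identification legitimate. I do not anticipate any real obstacle here; the substance of the corollary is entirely carried by Theorem \ref{thm32}, and what remains is merely to recast that identity in operator form.
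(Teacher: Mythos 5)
Your argument is correct and is essentially the paper's own: the paper likewise obtains the corollary by setting $c=e$ in \eqref{eq9} to identify the Appell coefficients $c_i=\mathcal{G}^{(k,\alpha)}_i(\lambda,a,b)$ and then invoking the equivalence between the binomial-sum and differential-operator forms of an Appell sequence. Your explicit verification via $D^i x^n = \tfrac{n!}{(n-i)!}x^{n-i}$ simply spells out the step the paper cites as a known characterization.
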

For example, when $n=3$, we have
\begin{align*}
\mathcal{G}^{(k,\alpha)}_{3}(x;\lambda,a,b)&=\left(\sum_{i=0}^{\infty}\frac{\mathcal{G}^{(k,\alpha)}_i(\lambda,a,b)}{i!}D^i\right)x^{3}\\
&=\frac{\mathcal{G}^{(k,\alpha)}_0(\lambda,a,b)}{0!}x^3+\frac{\mathcal{G}^{(k,\alpha)}_1(\lambda,a,b)}{1!}D^1 x^3+\frac{\mathcal{G}^{(k,\alpha)}_2(\lambda,a,b)}{2!}D^2 x^3\\
&\;\;\;\;\;\;+\frac{\mathcal{G}^{(k,\alpha)}_3(\lambda,a,b)}{3!}D^3 x^3\\
&=\mathcal{G}^{(k,\alpha)}_0(\lambda,a,b)x^3+3\mathcal{G}^{(k,\alpha)}_1(\lambda,a,b) x^2+3\mathcal{G}^{(k,\alpha)}_2(\lambda,a,b) x+\mathcal{G}^{(k,\alpha)}_3(\lambda,a,b).
\end{align*}
The next corollary immediately follows from equation \eqref{eq111} and the characterization of Appell polynomials \cite{ref51-1, ref57-1, ref57-2}. 

\begin{corr}\label{cor3-5}
The Apostol-type poly-Genocchi polynomials with parameters $a, b, c$ satisfy the addition formula
	\begin{equation}\label{eq012}
		\mathcal{G}_n^{(k,\alpha)} (x+y;\lambda,a,b)=\sum_{i=0}^{\infty} \binom{n}{i} \mathcal{G}_i^{(k,\alpha)} (x;\lambda,a,b) y^{n-i}.
	\end{equation}
\end{corr}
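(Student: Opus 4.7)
The plan is to prove the addition formula by exploiting the generating function of $\mathcal{G}_n^{(k,\alpha)}(x;\lambda,a,b)$ given in \eqref{eq-2} (the case $c=e$ of \eqref{eq711}), combined with the Appell characterization already secured by the differential identity \eqref{eq111}.

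First I would start from \eqref{eq-2} and substitute $x+y$ for $x$, writing
\begin{equation*}
\sum_{n=0}^{\infty}\mathcal{G}_n^{(k,\alpha)}(x+y;\lambda,a,b)\frac{t^n}{n!}
= \left(\frac{Li_k(1-(ab)^{-2t})}{a^{-t}+\lambda b^t}\right)^{\alpha} e^{(x+y)t}.
\end{equation*}
The essential observation is that $e^{(x+y)t}$ factors as $e^{xt}\cdot e^{yt}$, so the right-hand side splits as a product of two known generating series: the one for $\mathcal{G}_n^{(k,\alpha)}(x;\lambda,a,b)$ and the exponential series $\sum_{n\ge 0} y^n t^n/n!$. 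Applying the Cauchy product and collecting the coefficient of $t^n/n!$ then gives exactly \eqref{eq012}.

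An alternative (conceptual) route, and the one the paper alludes to, is to invoke the Appell-polynomial characterization directly. By Theorem \ref{thm34} and the subsequent remark, the sequence $\{\mathcal{G}_n^{(k,\alpha)}(\,\cdot\,;\lambda,a,b)\}_{n\ge 0}$ satisfies $\frac{d}{dx}\mathcal{G}_{n+1}^{(k,\alpha)}=(n+1)\mathcal{G}_n^{(k,\alpha)}$, which is the defining property of an Appell sequence. A standard theorem on Appell polynomials (see the references cited in the paragraph preceding the corollary) asserts that every Appell sequence $\{p_n\}$ obeys $p_n(x+y)=\sum_{i=0}^{n}\binom{n}{i}p_i(x)y^{n-i}$. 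Applying this to $p_n=\mathcal{G}_n^{(k,\alpha)}(\,\cdot\,;\lambda,a,b)$ yields the corollary without further computation.

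I do not anticipate any real obstacle; the only care needed is bookkeeping — making sure the upper index of summation in \eqref{eq012} is effectively $n$ (since $\binom{n}{i}=0$ for $i>n$) and keeping the Cauchy product indexing straight when matching coefficients of $t^n/n!$. I would present the generating function argument as the main proof for self-containedness, and mention the Appell-theoretic derivation as a one-line alternative.
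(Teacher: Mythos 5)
Your proposal is correct. Note, however, that it differs in emphasis from what the paper actually does for this corollary: the paper gives no computation here at all, but simply declares that the formula ``immediately follows from equation \eqref{eq111} and the characterization of Appell polynomials'' --- i.e.\ exactly your one-line alternative route. Your primary argument (substitute $x+y$ into \eqref{eq-2}, factor $e^{(x+y)t}=e^{xt}e^{yt}$, take the Cauchy product, compare coefficients of $t^n/n!$) is instead the computation the paper carries out immediately \emph{after} the corollary to prove the more general Theorem \ref{thm36} with the $(\ln c)^{n-i}$ factors, of which Corollary \ref{cor3-5} is the $c=e$ case. So both of your routes appear in the paper, just with their roles swapped. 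Your generating-function proof actually buys something: the classical Appell characterization requires $p_0$ to be a nonzero constant, whereas here $Li_k\bigl(1-(ab)^{-2t}\bigr)$ vanishes at $t=0$, so $\mathcal{G}_0^{(k,\alpha)}(\lambda,a,b)=0$ and the sequence is Appell only in a degenerate sense; the direct Cauchy-product argument is insensitive to this and is therefore the more airtight of the two. Your bookkeeping remarks (the upper limit $\infty$ in \eqref{eq012} is effectively $n$ since $\binom{n}{i}=0$ for $i>n$) are also consistent with the paper, which writes the same improper upper limit.
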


\indent Taking $x=0$ in formula \eqref{eq012} and using the fact $\mathcal{G}_n^{(k)}(0;\lambda,a,b)=\mathcal{G}_n^{(k)}(\lambda,a,b)$, Corollary \ref{cor3-5} gives formula \eqref{eq9} in Theorem \ref{thm32} with $c=e$.

An extension of this addition formula can be derived as follows:
\begin{align*}
\sum_{n=0}^{\infty} \mathcal{G}_n^{(k,\alpha)}(x+y;\lambda, a, b, c)\frac{t^n}{n!} &= \left(\frac{Li_{k} (1-(ab)^{-2t})}{(a^{-t}+\lambda b^{t})}\right)^{\alpha} c^{xt}e^{yt\ln c} \\      
&= \left(\sum_{n=0}^{\infty} \mathcal{G}_n^{(k,\alpha)}(x;\lambda, a, b, c)\frac{t^n}{n!}\right)\left(\sum_{n=0}^{\infty} (y\ln c)^n\frac{t^n}{n!} \right) \\
&=\sum_{n=0}^{\infty}\left(\sum_{i=0}^n \binom{n}{i}\mathcal{G}_i^{(k,\alpha)}(x;\lambda, a, b, c)(y\ln c)^{n-i}\right)\frac{t^n}{n!}. 
\end{align*}
Comparing the coefficients of $\frac{t^n}{n!}$ gives the following theorem.

\begin{thrm}\label{thm36} The Apostol-type poly-Genocchi polynomials of higher order with parameters $a$, $b$ and $c$ satisfy the addition formula,
	\begin{equation}\label{eq12}
		\mathcal{G}_n^{(k,\alpha)} (x+y;\lambda,a,b,c)=\sum_{i=0}^{\infty} \binom{n}{i} (\ln c)^{n-i}\mathcal{G}_i^{(k,\alpha)} (x;\lambda,a,b,c) y^{n-i}.
	\end{equation}
\end{thrm}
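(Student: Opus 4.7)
The plan is to mimic the proof of Theorem 3.1, working directly from the defining generating function \eqref{eq711}. The core observation is that $c^{(x+y)t}=c^{xt}\cdot c^{yt}=c^{xt}\cdot e^{yt\ln c}$, so the shift from $x$ to $x+y$ in the argument multiplies the generating function by $e^{yt\ln c}$, which has the clean power series $\sum_{n\ge 0}(y\ln c)^n t^n/n!$.

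First I would write
\[
\sum_{n=0}^{\infty}\mathcal{G}_n^{(k,\alpha)}(x+y;\lambda,a,b,c)\frac{t^n}{n!}
=\left(\frac{Li_{k}(1-(ab)^{-2t})}{a^{-t}+\lambda b^{t}}\right)^{\alpha}c^{xt}\cdot e^{yt\ln c},
\]
then recognize the first factor as the generating function of $\mathcal{G}_i^{(k,\alpha)}(x;\lambda,a,b,c)$ and expand the second factor as $\sum_{j\ge 0}(y\ln c)^j t^j/j!$. A Cauchy product of these two series, followed by collecting the coefficient of $t^n/n!$ via the binomial identity $\binom{n}{i}=\frac{n!}{i!(n-i)!}$, yields the claimed formula.

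Finally, comparing coefficients of $t^n/n!$ on both sides of the resulting identity produces the desired addition formula. The argument is entirely mechanical, so there is no real obstacle; the only point to be careful about is that the sum on the right-hand side is genuinely finite (the upper index should read $n$ rather than $\infty$, since the Cauchy product of two power series truncates at $n$), and that the case $c=e$ reproduces the Appell-type addition formula of Corollary \ref{cor3-5}, giving a consistency check. In fact, this derivation has essentially been sketched in the paragraph preceding the theorem, and the proof I propose simply records that computation in theorem-proof form.
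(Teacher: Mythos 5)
Your proposal is correct and is essentially identical to the paper's own derivation, which factors $c^{(x+y)t}=c^{xt}e^{yt\ln c}$, expands the product of the two series as a Cauchy product, and compares coefficients of $t^n/n!$ (this computation appears in the paragraph immediately preceding the theorem). Your observation that the upper summation limit should be $n$ rather than $\infty$ is also a correct reading of what the Cauchy product actually yields.
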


\smallskip
By taking $x=0$, equation \eqref{eq12} exactly gives \eqref{eq9}.

\section{Connections with Some Special Numbers and Polynomials}

In this section, some connections of the higher order Apostol-type poly-Genocchi polynomials $\mathcal{G}_{n}^{(k,\alpha)}(x;\lambda,a,b,c)$ with other well-known special numbers and polynomials will be established.

\smallskip
To introduce the first connection, we need to define an Apostol-type poly-Bernoulli polynomials of higher order with parameters $a$, $b$ and $c$  as follows,
\begin{equation}\label{ApostolPolyBernoulli}
\left(\frac{Li_{k}(1-e^{-t})}{\lambda e^{t}-1}\right)^{\alpha}e^{xt}=\sum_{n=0}^{\infty}\mathcal{B}_n^{(k,\alpha)}(x;\lambda)\frac{x^n}{n!}.
\end{equation}
When $\alpha=1$, \eqref{ApostolPolyBernoulli} reduces to
\begin{equation}\label{ApostolPolyBernoulli-1}
\left(\frac{Li_{k}(1-e^{-t})}{\lambda e^{t}-1}\right)^{\alpha}e^{xt}=\sum_{n=0}^{\infty}\mathcal{B}_n^{(k)}(x;\lambda)\frac{x^n}{n!},
\end{equation}
where $\mathcal{B}_n^{(k)}(x;\lambda)=\mathcal{B}_n^{(k,1)}(x;\lambda)$ denotes the Apostol-type poly-Bernoulli polynomials with parameters $a$, $b$ and $c$.  When $k=1$, \eqref{ApostolPolyBernoulli} gives
\begin{equation}\label{ApostolBernoulli}
\left(\frac{t}{\lambda e^{t}-1}\right)^{\alpha}e^{xt}=\sum_{n=0}^{\infty}\mathcal{B}_n^{(1,\alpha)}(x;\lambda)\frac{x^n}{n!},
\end{equation}
where $\mathcal{B}_n^{(1,\alpha)}(x;\lambda)={B}^{(\alpha)}_n(x;\lambda)$, the Apostol-Bernoulli polynomials of higher order in \cite{ref54-1}. Also, when $\lambda=1$, \eqref{ApostolPolyBernoulli} will give 
\begin{equation}\label{PolyBernoulli}
\left(\frac{Li_{k}(1-e^{-t})}{e^{t}-1}\right)^{\alpha}e^{xt}=\sum_{n=0}^{\infty}\mathcal{B}_n^{(k,\alpha)}(x;1)\frac{x^n}{n!},
\end{equation}
where $\mathcal{B}_n^{(k,\alpha)}(x;1)=B_n^{(k,\alpha)}(x)$, the higher order version of poly-Bernoulli polynomials of Bayad and Hamahata \cite{ref16,ref47-1}. When $\alpha=1$, \eqref{PolyBernoulli} reduces to the definition of poly-Bernoulli numbers and polynomials \cite{ref41,ref42,ref43,ref47-1}. 

\smallskip
Now, we are ready to introduce the following theorem.

\begin{thrm} The Apostol-type poly-Genocchi polynomials of higher order with parameters $a, b, c$ satisfy the relation
\begin{align}
\mathcal{G}_{n}^{(k,\alpha)}(x;\lambda,a,b,c)&=\sum_{j=0}^{\alpha}\binom{\alpha}{j}(-1)^j\lambda^{\alpha-j} \mathcal{B}_{n}^{(k,\alpha)} \left( \frac {(\alpha-j)\ln b+x\ln c+(2\alpha-j)\ln a}{2(\ln a+\ln b)};\lambda^2 \right)\nonumber\\
&\;\;\;\;\;\;\;\;\;\;\times 2^{n}(\ln a+\ln b)^{n}.\label{eq55} 
\end{align}
In particular, the Apostol-type poly-Genocchi polynomials with parameters $a, b, c$ satisfy the relation,
\begin{align}
\mathcal{G}_{n}^{(k)}(x;\lambda,a,b,c)&=\left\{\lambda \mathcal{B}_{n}^{(k)} \left( \frac {\ln b+2\ln a+x\ln c}{2(\ln a+\ln b)};\lambda^2 \right)\right.\nonumber\\
&\;\;\;\;\;\;\;\left.-\mathcal{B}_{n}^{(k)} \left( \frac {\ln a+x\ln c}{2(\ln a+\ln b)};\lambda^2 \right) \right\} 2^{n}(\ln a+\ln b)^{n}.\label{eq13-o1} 
\end{align}
\end{thrm}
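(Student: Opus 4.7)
The plan is to express the generating function on the right-hand side of \eqref{eq711} in a form that matches a sum of generating functions of the Apostol-type poly-Bernoulli polynomials of higher order defined in \eqref{ApostolPolyBernoulli}. The pivotal algebraic identity is
$$(1+\lambda e^s)(\lambda e^s -1)=\lambda^2 e^{2s}-1,$$
which, after writing $a^{-t}+\lambda b^t=a^{-t}(1+\lambda e^{t\ln(ab)})$, allows us to recast
$$\frac{1}{a^{-t}+\lambda b^{t}}=\frac{a^{t}\,(\lambda e^{t\ln(ab)}-1)}{\lambda^{2}e^{2t\ln(ab)}-1}.$$
This brings the denominator into the exact shape needed for $\mathcal{B}_n^{(k,\alpha)}$ with parameter $\lambda^2$ and time variable $2t\ln(ab)$.

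Next I would raise the above identity to the $\alpha$-th power and expand the numerator using the binomial theorem:
$$(\lambda e^{t\ln(ab)}-1)^{\alpha}=\sum_{j=0}^{\alpha}\binom{\alpha}{j}(-1)^{j}\lambda^{\alpha-j}e^{(\alpha-j)t\ln(ab)}.$$
Substituting this expansion back into the generating function \eqref{eq711}, and combining the factor $a^{\alpha t}=e^{\alpha t\ln a}$, the exponential $e^{(\alpha-j)t\ln(ab)}$ and the factor $c^{xt}=e^{xt\ln c}$ into a single exponential, the accumulated exponent becomes
$$t\bigl[(2\alpha-j)\ln a+(\alpha-j)\ln b+x\ln c\bigr].$$
To match the structure $e^{y\cdot 2t\ln(ab)}$ appearing in the Apostol-type poly-Bernoulli generating function, I set
$$y=\frac{(2\alpha-j)\ln a+(\alpha-j)\ln b+x\ln c}{2(\ln a+\ln b)}.$$

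With this substitution, each summand in $j$ becomes exactly
$$\left(\frac{\mathrm{Li}_{k}(1-e^{-2t\ln(ab)})}{\lambda^{2}e^{2t\ln(ab)}-1}\right)^{\!\alpha}e^{y\cdot 2t\ln(ab)}=\sum_{n=0}^{\infty}\mathcal{B}_n^{(k,\alpha)}(y;\lambda^{2})\,\frac{(2t\ln(ab))^{n}}{n!},$$
by \eqref{ApostolPolyBernoulli}. Pulling out the constants $2^{n}(\ln a+\ln b)^{n}$ and reassembling the outer sum over $j$, I obtain
$$\sum_{n=0}^{\infty}\mathcal{G}_n^{(k,\alpha)}(x;\lambda,a,b,c)\frac{t^{n}}{n!}=\sum_{n=0}^{\infty}\left\{\sum_{j=0}^{\alpha}\binom{\alpha}{j}(-1)^{j}\lambda^{\alpha-j}\mathcal{B}_n^{(k,\alpha)}(y;\lambda^{2})\,2^{n}(\ln a+\ln b)^{n}\right\}\frac{t^{n}}{n!}.$$
Comparing coefficients of $t^{n}/n!$ yields \eqref{eq55}. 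The specialization \eqref{eq13-o1} follows by setting $\alpha=1$ and writing out the two terms $j=0,1$ explicitly.

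The only delicate step is the algebraic rewriting of the denominator via the $(1+\lambda e^{s})(\lambda e^{s}-1)=\lambda^{2}e^{2s}-1$ trick; everything else is a bookkeeping exercise of matching exponents and invoking the definition of $\mathcal{B}_n^{(k,\alpha)}$. Care must be taken with the factors $a^{\alpha t}$ and with the factor $2\ln(ab)$ hidden in the time rescaling so that the affine argument of $\mathcal{B}_n^{(k,\alpha)}$ comes out correctly.
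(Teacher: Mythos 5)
Your proposal is correct and follows essentially the same route as the paper: both convert the denominator $a^{-t}+\lambda b^{t}$ into $\lambda^{2}e^{2t\ln(ab)}-1$ by multiplying through by a conjugate factor (your $(1+\lambda e^{s})(\lambda e^{s}-1)=\lambda^{2}e^{2s}-1$ identity is the same manipulation as the paper's multiplication by $a^{-t}-\lambda b^{t}$), expand the resulting numerator by the binomial theorem, and read off each term against the generating function \eqref{ApostolPolyBernoulli} with parameter $\lambda^{2}$ in the rescaled variable $2t\ln(ab)$. Your sign bookkeeping, the exponent $t[(2\alpha-j)\ln a+(\alpha-j)\ln b+x\ln c]$, and the specialization to $\alpha=1$ all check out.
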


\begin{proof} Rewrite equation \eqref{eq711} as
\begin{align*}
\sum_{n=0}^{\infty} \mathcal{G}_{n}^{(k,\alpha)}(x;\lambda,a,b,c) \frac{t^n}{n!}&=\left(\frac{Li_{k}(1-(ab)^{-2t})}{a^{-t}+\lambda b^{t}}\right)^{\alpha} e^{xt \ln c}\\
&=\left(\frac{Li_{k}(1-(ab)^{-2t})}{a^{-2t}-(\lambda b^{t})^2}(a^{-t}-\lambda b^{t})\right)^{\alpha} e^{xt \ln c}\\
&=\left(\frac{Li_{k}(1-(ab)^{-2t})}{(1-(\lambda (ab)^{t})^2)}\right)^{\alpha}(e^{-t\ln a }-\lambda e^{t \ln b})^{\alpha}e^{xt \ln c} e^{2\alpha t\ln a}\\
&=\left(\frac{Li_{k}(1-e^{-2t(\ln ab)})}{-(\lambda e^{2t\ln (ab)}-1)}\right)^{\alpha}\left(e^{t(-\ln a+(x\ln c/\alpha)+2\ln a)}\right.\\
&\;\;\;\;\;\;\;\;\left.-\lambda e^{t(\ln b+(x\ln c/\alpha)+2\ln a)}\right)^{\alpha}.
\end{align*}
Applying the Binomial Theorem yields
\begin{align*}
&\sum_{n=0}^{\infty} \mathcal{G}_{n}^{(k,\alpha)}(x;\lambda,a,b,c) \frac{t^n}{n!}\\
&=\left(\frac{Li_{k}(1-e^{-2t(\ln ab)})}{-(\lambda e^{2t\ln (ab)}-1)}\right)^{\alpha}\sum_{j=0}^{\alpha}\binom{\alpha}{j}(-\lambda)^{\alpha-j}e^{jt(\ln a+(x\ln c/\alpha))}e^{(\alpha-j)t(\ln b+(x\ln c/\alpha)+2\ln a)}\\
&=\sum_{j=0}^{\alpha}\binom{\alpha}{j}(-1)^j\lambda^{\alpha-j}\left(\frac{Li_{k}(1-e^{-2t(\ln ab)})}{\lambda^2 e^{2t\ln (ab)}-1}\right)^{\alpha}e^{[((\alpha-j)\ln b+x\ln c+(2\alpha-j)\ln a)/2\ln ab](2t\ln ab)}.
\end{align*}
Using the definition of Apostol-type poly-Bernoulli polynomials in \eqref{ApostolPolyBernoulli}, we have
\begin{align*}
&\sum_{n=0}^{\infty} \mathcal{G}_{n}^{(k)}(x;\lambda,a,b,c) \frac{t^n}{n!}\\
&=\sum_{j=0}^{\alpha}\binom{\alpha}{j}(-1)^j\lambda^{\alpha-j}\left\{\sum_{n=0}^{\infty} \mathcal{B}_{n}^{(k,\alpha)} \left( \frac {(\alpha-j)\ln b+x\ln c+(2\alpha-j)\ln a}{2(\ln a+\ln b)};\lambda^2 \right) 2^{n}(\ln ab)^{n}\frac{t^n}{n!}\right\}\\
&=\sum_{n=0}^{\infty}\left\{\sum_{j=0}^{\alpha}\binom{\alpha}{j}(-1)^j\lambda^{\alpha-j} \mathcal{B}_{n}^{(k,\alpha)} \left( \frac {(\alpha-j)\ln b+x\ln c+(2\alpha-j)\ln a}{2(\ln a+\ln b)};\lambda^2 \right) 2^{n}(\ln ab)^{n}\right\}\frac{t^n}{n!}. 
\end{align*}
Comparing the coefficients of $\frac{t^n}{n!}$ yields \eqref{eq55}.
\end{proof}

\smallskip
The next theorem contains an identity that relates the Apostol-type poly-Genocchi polynomials of higher order with parameters $a, b$ and $c$ to Stirling numbers of the second kind $\rsts{n}{m}$ defined in \cite{ref22} by
\begin{equation}\label{eq14}
\sum_{n=m}^{\infty}\rsts{n}{m}\frac{t^n}{n!}=\frac{(e^t-1)^m}{m!}.
\end{equation}
Here, it is important to note that if $(c_0, c_1, \ldots, c_j, \ldots)$ is any sequence of numbers and $l$ is a positive integer, then
\begin{align}
\left(\sum_{j=0}^{\infty}c_j\frac{t^j}{j!}\right)^{l}&=\prod_{i=1}^{l}\left(\sum_{n_i=0}^{\infty}\frac{c_{n_i}}{n_i!}t^{n_i}\right)\nonumber\\
&=\sum_{n=0}^{\infty}\left\{\sum_{n_1+n_2+\ldots+n_{\alpha}=n}\prod_{i=1}^{l}c_{n_i}\binom{n}{n_1,n_2,\ldots,n_{\alpha}}\right\}\frac{t^n}{n!}.\label{series}
\end{align}
(see \cite{ref22}).

\bigskip
\begin{thrm}\label{type1ApostolPG-1}
The Apostol-type poly-Genocchi polynomials of higher order with parameters $a, b$ and $c$ satisfies the relation,
\begin{equation}\label{ident2-1}
\mathcal{G}_{n}^{(k,\alpha)}(x; \lambda, a, b, c)=\sum_{j=0}^{n}\binom{n}{j}(\ln a + \ln b)^{n-j} \mathcal{G}_{n-j}^{(\alpha)}\left(\ \frac {x \ln c + \alpha\ln a}{\ln a + \ln b};\lambda  \right)d_j,
\end{equation}
where
\begin{equation*}
d_j=\sum_{n_1+n_2+\ldots+n_{\alpha}=j}\prod_{i=1}^{\alpha}c_{n_i}\binom{j}{n_1,n_2,\ldots,n_{\alpha}}\;\;\mbox{and}\;\;c_j=\sum_{m=0}^{j}(-1)^{m+1}\frac{(2\ln ab)^jm!\rsts{j+1}{m+1}}{(j+1)(m+1)^{k-1}}.
\end{equation*}
\end{thrm}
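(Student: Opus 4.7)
The plan is to split the defining generating function \eqref{eq711} into the product
$$\left(\frac{Li_{k}(1-(ab)^{-2t})}{a^{-t}+\lambda b^{t}}\right)^{\alpha} c^{xt} = \left(\frac{Li_{k}(1-(ab)^{-2t})}{2t\ln(ab)}\right)^{\alpha}\cdot\left(\frac{2t\ln(ab)}{a^{-t}+\lambda b^{t}}\right)^{\alpha} c^{xt}.$$
The second factor on the right is, by \eqref{eqq711}, the generating function of the Apostol-type Genocchi polynomials of higher order $\mathcal{G}_n^{(\alpha)}(x;\lambda,a,b,c)=\mathcal{G}_n^{(1,\alpha)}(x;\lambda,a,b,c)$, and Theorem \ref{thm33} specialized to $k=1$ lets me rewrite these as $(\ln a+\ln b)^n\, \mathcal{G}_n^{(\alpha)}\!\left(\frac{x\ln c+\alpha\ln a}{\ln a+\ln b};\lambda\right)$, which is precisely the Apostol-type Genocchi factor appearing in \eqref{ident2-1}.

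For the first factor, I would expand $Li_{k}(1-(ab)^{-2t})$ by substituting $u=-2t\ln(ab)$ into the Stirling identity \eqref{eq14}, namely $(e^{u}-1)^m/m!=\sum_{n\ge m}\rsts{n}{m}u^n/n!$, and then plugging the resulting expression for $(1-e^{-2t\ln(ab)})^m$ into the definition $Li_{k}(z)=\sum_{m\ge1}z^m/m^k$ of the polylogarithm. After interchanging summations and reindexing ($n\to j+1$, $m\to m+1$) to bring out the prefactor $1/(j+1)$ that cancels the denominator $2t\ln(ab)$, I arrive at
$$\frac{Li_{k}(1-(ab)^{-2t})}{2t\ln(ab)}=\sum_{j=0}^{\infty}c_j\frac{t^j}{j!},$$
with $c_j$ matching the formula stated in the theorem once the signs coming from $(-1)^{m+n}$ and the powers of $2\ln(ab)$ are collected. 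Raising this series to the power $\alpha$ and applying the multinomial expansion \eqref{series} then produces
$$\left(\frac{Li_{k}(1-(ab)^{-2t})}{2t\ln(ab)}\right)^{\alpha}=\sum_{j=0}^{\infty}d_j\frac{t^j}{j!},$$
with $d_j$ exactly the multinomial convolution in the statement.

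The remaining step is to form the Cauchy product of the two series above and extract the coefficient of $t^n/n!$, which yields
$$\mathcal{G}_n^{(k,\alpha)}(x;\lambda,a,b,c) = \sum_{j=0}^{n}\binom{n}{j}d_j\,\mathcal{G}_{n-j}^{(\alpha)}(x;\lambda,a,b,c);$$
substituting the Theorem \ref{thm33} reduction of $\mathcal{G}_{n-j}^{(\alpha)}(x;\lambda,a,b,c)$ then gives \eqref{ident2-1} directly. I expect the main obstacle to be the careful bookkeeping in the Stirling-number expansion of $Li_{k}(1-e^{-s})/s$: interchanging sums, shifting the indices to expose the $1/(j+1)$ factor, and collapsing the combined signs $(-1)^{m+n}$ into the form appearing in the stated $c_j$. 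Everything else reduces to a routine Cauchy product together with the previously established Theorem \ref{thm33}.
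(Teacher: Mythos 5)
Your proposal follows essentially the same route as the paper's proof: factor out $\left(\frac{2t\ln ab}{a^{-t}+\lambda b^{t}}\right)^{\alpha}c^{xt}$, expand $Li_{k}(1-(ab)^{-2t})/(2t\ln ab)$ via the Stirling-number identity \eqref{eq14} to obtain the $c_j$, raise to the $\alpha$-th power using \eqref{series} to obtain the $d_j$, take the Cauchy product, and finish with Theorem \ref{thm33}. The only caveat is the sign bookkeeping you flag yourself: an honest expansion of $(1-e^{-2t\ln ab})^{m+1}$ produces a factor $(-1)^{m+j}$ rather than the $(-1)^{m+1}$ appearing in the stated $c_j$, but the paper's own proof carries the identical discrepancy (it silently expands $(1-e^{2t\ln ab})^{m+1}$ instead), so your derivation reproduces the intended argument.
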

\begin{proof}
Now, \eqref{eq711} can be written as
\begin{align*}
\sum_{n=0}^{\infty}\mathcal{G}_{n}^{(k,\alpha)}(x; \lambda, a, b, c)\frac{t^n}{n!}&=\frac{c^{xt}}{(a^{-t}+\lambda b^t)^{\alpha}}\left(\sum_{m=1}^{\infty}\frac{(1-e^{2t\ln ab})^m}{m^k}\right)^{\alpha}\\
&=\frac{c^{xt}}{(a^{-t}+\lambda b^t)^{\alpha}}\left(\sum_{m=0}^{\infty}\frac{(1-e^{2t\ln ab})^{m+1}}{(m+1)^k}\right)^{\alpha}\\
&=\frac{c^{xt}}{(a^{-t}+\lambda b^t)^{\alpha}}\left(\sum_{m=0}^{\infty}\frac{m!}{(m+1)^{k-1}}\frac{(1-e^{2t\ln ab})^{m+1}}{(m+1)!}\right)^{\alpha}\\
&=\frac{c^{xt}}{(a^{-t}+\lambda b^t)^{\alpha}}\left(\sum_{m=0}^{\infty}\frac{(-1)^{m+1}m!}{(m+1)^{k-1}}\sum_{j=m+1}^{\infty}\rsts{j}{m+1}\frac{(2t\ln ab)^j}{j!}\right)^{\alpha}\\
&=c^{xt}\left(\frac{2t\ln ab }{a^{-t}+\lambda b^t}\right)^{\alpha}\left(\sum_{m=0}^{\infty}\sum_{j=m}^{\infty}\frac{(-1)^{m+1}m!\rsts{j+1}{m+1}}{(j+1)(m+1)^{k-1}}\frac{(2t\ln ab)^j}{j!}\right)^{\alpha}.
\end{align*}
Using \eqref{eqq711}, we get
\begin{align*}
\sum_{n=0}^{\infty}\mathcal{G}_{n}^{(k,\alpha)}(x; \lambda, a, b, c)\frac{t^n}{n!}=\left(\sum_{n=0}^{\infty}\mathcal{G}_{n}^{(\alpha)}(x; \lambda, a, b, c)\frac{t^n}{n!}\right)\left(\sum_{j=0}^{\infty}c_j\frac{t^j}{j!}\right)^{\alpha},
\end{align*}
where 
$$c_j=\sum_{m=0}^{j}(-1)^{m+1}\frac{(2\ln ab)^jm!\rsts{j+1}{m+1}}{(j+1)(m+1)^{k-1}}.$$
Note that, using \eqref{series}, $\left(\sum_{j=0}^{\infty}c_j\frac{t^j}{j!}\right)^{\alpha}$ can be expressed as
\begin{align*}
\left(\sum_{j=0}^{\infty}c_j\frac{t^j}{j!}\right)^{\alpha}=\sum_{n=0}^{\infty}d_n\frac{t^n}{n!},
\end{align*}
where 
$$d_n=\sum_{n_1+n_2+\ldots+n_{\alpha}=n}\prod_{i=1}^{\alpha}c_{n_i}\binom{n}{n_1,n_2,\ldots,n_{\alpha}}.$$
It follows that
\begin{align*}
\sum_{n=0}^{\infty}\mathcal{G}_{n}^{(k,\alpha)}(x; \lambda, a, b, c)\frac{t^n}{n!}=\sum_{n=0}^{\infty}\left\{\sum_{j=0}^{n}\binom{n}{j}\mathcal{G}_{n-j}^{(\alpha)}(x; \lambda, a, b, c)d_j\right\}\frac{t^n}{n!}.
\end{align*}
Comparing the coefficients and using equation \eqref{eq10} complete the proof of the theorem.
\end{proof}

\begin{rema}\rm When $\alpha=1$, $d_j=c_j$.
\end{rema}

\smallskip
The identities in the following theorem are derived using the fact that the Apostol-type poly-Genocchi polynomials of higher order $\mathcal{G}_n^{(k,\alpha)}(x;\lambda, a, b)$ with parameters $a$ and $b$ satisfy the relation in \eqref{eq13}.

\begin{thrm}\label{thm38} The Apostol-type poly-Genocchi polynomials of higher order $\mathcal{G}_n^{(k,\alpha)}(x;\lambda, a, b, c)$ with parameters $a, b, c$ satisfy the following explicit formulas:

	\begin{equation} \label{eq115}
			 \mathcal{G}_n^{(k,\alpha)}(x;\lambda, a, b, c)=\sum_{m=0}^{\infty} \sum_{l=m}^{n} \begin{Bmatrix}l\\m\end{Bmatrix} \binom{n}{l}(\ln c)^l\mathcal{G}_{n-l}^{(k,\alpha)}(-m\ln c;\lambda,a,b)(x)^{(m)}
	\end{equation}
	\begin{equation}\label{eq116}
			\mathcal{G}_n^{(k,\alpha)}(x;\lambda, a, b, c)=\sum_{m=0}^{\infty} \sum_{l=m}^{n} \begin{Bmatrix}l\\m\end{Bmatrix} \binom{n}{l}(\ln c)^l \mathcal{G}_{n-l}^{(k,\alpha)}(\lambda,a,b)(x)_{m}
	\end{equation}
	\begin{equation}\label{eq117}
		\mathcal{G}_n^{(k,\alpha)}(x;\lambda, a, b)\sum_{l=0}^{n}\sum_{m=0}^{n-l}\binom{n}{l}\rsts{l+s}{s}\frac{\binom{n-l}{m}}{\binom{l+s}{s}}\mathcal{G}_{n-l-m}^{(k,\alpha)}(\lambda,a,b)B_m^{(s)}(x\ln c;\lambda)
	\end{equation}
	\begin{equation}\label{eq118}
		\mathcal{G}_n^{(k,\alpha)}(x;\lambda, a, b)=\sum_{m=0}^{\infty} \dfrac{\binom{n}{m}}{(1-\mu)^{s}} \sum_{j=0}^{s} {\binom{s}{j}} (-\mu)^{s-j}  \mathcal{G}_{n-m}^{(k,\alpha)}(j;\lambda,a,b)F_{m}^{(s)}(x;\mu),
	\end{equation}

\noindent where $(x)^{(n)}=x(x+1)\cdots(x+n-1),(x)_n=x(x-1)\cdots(x-n+1)$, the rising and falling factorials of $x$ of degree $n$ and $F_{n}^{(s)}(x;\mu)$, the Frobenius polynomials of higher order \cite{ref54-1}, defined by
$$\left(\ \dfrac{1-\mu}{e^{t}-\mu} \right)^{s}\ e^{xt}=\sum_{n=0}^{\infty}F_{n}^{(s)}(x;\mu)\frac{t^n}{n!}.$$
\end{thrm}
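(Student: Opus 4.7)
The plan is to prove each of the four explicit formulas by factoring the defining generating function
$$\left(\frac{Li_{k}(1-(ab)^{-2t})}{a^{-t}+\lambda b^{t}}\right)^{\alpha} c^{xt}$$
into a product of power series whose coefficients are already known, and then reading off the coefficient of $t^{n}/n!$ via the Cauchy product. The common engine is the Stirling expansion $(e^{u}-1)^{m}/m!=\sum_{l}\rsts{l}{m}u^{l}/l!$ from \eqref{eq14} applied with $u=t\ln c$ or $u=t$.

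For \eqref{eq115}, I would begin from the binomial series $(1-u)^{-x}=\sum_{m\ge 0}(x)^{(m)}u^{m}/m!$ applied to $u=1-c^{-t}$, which yields $c^{xt}=\sum_{m}(x)^{(m)}(c^{t}-1)^{m}c^{-mt}/m!$. Expanding $(c^{t}-1)^{m}/m!=\sum_{l}\rsts{l}{m}(\ln c)^{l}t^{l}/l!$ via \eqref{eq14} and recognizing $\left(\frac{Li_{k}(1-(ab)^{-2t})}{a^{-t}+\lambda b^{t}}\right)^{\alpha}c^{-mt}=\sum_{p}\mathcal{G}_{p}^{(k,\alpha)}(-m\ln c;\lambda,a,b)t^{p}/p!$ from \eqref{eq-2} evaluated at $x\mapsto -m\ln c$, the three-fold Cauchy product followed by comparison of the $t^{n}/n!$ coefficients produces \eqref{eq115}. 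For \eqref{eq116}, I would instead use the dual identity $(1+u)^{x}=\sum_{m}(x)_{m}u^{m}/m!$ with $u=c^{t}-1$, so that $c^{xt}=\sum_{m}(x)_{m}(c^{t}-1)^{m}/m!$; expanding the Stirling factor exactly as above and pairing with the numbers generating series $\left(\frac{Li_{k}(1-(ab)^{-2t})}{a^{-t}+\lambda b^{t}}\right)^{\alpha}=\sum_{p}\mathcal{G}_{p}^{(k,\alpha)}(\lambda,a,b)t^{p}/p!$ from \eqref{polyGenocchiNumbers}, the resulting Cauchy product yields \eqref{eq116}.

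For \eqref{eq117}, I would insert the auxiliary pair $\left(\frac{e^{t}-1}{t}\right)^{s}\left(\frac{t}{\lambda e^{t}-1}\right)^{s}$ and split the expression into three factors: the Stirling-type series $\left(\frac{e^{t}-1}{t}\right)^{s}=\sum_{l}\frac{\rsts{l+s}{s}}{\binom{l+s}{s}}\frac{t^{l}}{l!}$ coming from \eqref{eq14}, the Apostol-Bernoulli series $\left(\frac{t}{\lambda e^{t}-1}\right)^{s}e^{(x\ln c)t}=\sum_{m}B_{m}^{(s)}(x\ln c;\lambda)t^{m}/m!$ from \eqref{ApostolBernoulli}, and the poly-Genocchi numbers series from \eqref{polyGenocchiNumbers}; a three-factor Cauchy product (using $\binom{n}{l}\binom{n-l}{m}=\binom{n}{l,m,n-l-m}$) then delivers \eqref{eq117}. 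For \eqref{eq118}, I would use the Frobenius generating function $\left(\frac{1-\mu}{e^{t}-\mu}\right)^{s}e^{xt}=\sum_{m}F_{m}^{(s)}(x;\mu)t^{m}/m!$ together with its reciprocal $\left(\frac{e^{t}-\mu}{1-\mu}\right)^{s}=(1-\mu)^{-s}\sum_{j=0}^{s}\binom{s}{j}(-\mu)^{s-j}e^{jt}$ obtained from the binomial theorem; inserting the product of these two expressions (which equals $1$) into the defining generating function, recognizing $\left(\frac{Li_{k}(1-(ab)^{-2t})}{a^{-t}+\lambda b^{t}}\right)^{\alpha}e^{jt}=\sum_{p}\mathcal{G}_{p}^{(k,\alpha)}(j;\lambda,a,b)t^{p}/p!$ for each integer $j$ via \eqref{eq-2}, and Cauchy-multiplying with the Frobenius expansion yields \eqref{eq118}.

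The routine work throughout is the Cauchy-product bookkeeping with Stirling-number tails, which is elementary once the correct factorization is in hand. The step I expect to require the most care is \eqref{eq117}: the triple factorization $\left(\frac{e^{t}-1}{t}\right)^{s}\left(\frac{t}{\lambda e^{t}-1}\right)^{s}$ differs from $1$ by the factor $\left(\frac{e^{t}-1}{\lambda e^{t}-1}\right)^{s}$, so the derivation as described is clean only in the specialization $\lambda=1$ (where the Apostol-Bernoulli polynomials $B_{m}^{(s)}(\cdot;\lambda)$ reduce to the classical Bernoulli polynomials of higher order); for generic $\lambda$ one must either absorb the $\lambda$-discrepancy into a suitably deformed Stirling coefficient or restrict the statement accordingly, and verifying that the bookkeeping still collapses to the stated sum is the delicate point.
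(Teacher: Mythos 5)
Your proposal is correct in outline and follows essentially the same route as the paper for all four identities: the paper likewise expands $c^{xt}=\sum_{m}\binom{x+m-1}{m}(1-e^{-t\ln c})^{m}$ for \eqref{eq115} and $c^{xt}=\sum_{m}\binom{x}{m}(e^{t\ln c}-1)^{m}$ for \eqref{eq116}, converts $(e^{t\ln c}-1)^{m}/m!$ by \eqref{eq14}, and Cauchy-multiplies against the series for $\mathcal{G}_{p}^{(k,\alpha)}(-m\ln c;\lambda,a,b)$, respectively $\mathcal{G}_{p}^{(k,\alpha)}(\lambda,a,b)$; for \eqref{eq118} it inserts the identical pair $\left(\frac{1-\mu}{e^{t}-\mu}\right)^{s}\left(\frac{e^{t}-\mu}{1-\mu}\right)^{s}=1$, expands the second factor binomially, and absorbs $e^{jt}$ into the poly-Genocchi generating function exactly as you describe. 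The caveat you raise about \eqref{eq117} is the one substantive point of divergence, and it is well founded: the paper avoids your non-unit insertion by instead multiplying and dividing by $\frac{(e^{t}-1)^{s}}{s!}\cdot\frac{s!}{t^{s}}$, which really is $1$, but it then declares the middle piece $\frac{t^{s}e^{xt\ln c}}{(e^{t}-1)^{s}}$ to have coefficients $B_{m}^{(s)}(x\ln c;\lambda)$, whereas by \eqref{ApostolBernoulli} those are the coefficients of $\frac{t^{s}e^{xt\ln c}}{(\lambda e^{t}-1)^{s}}$. So the paper's own proof of \eqref{eq117} is valid only when $\lambda=1$ (equivalently, with $B_{m}^{(s)}(\cdot;\lambda)$ read as the classical higher-order Bernoulli polynomial), which is precisely the restriction you anticipated; your instinct that for general $\lambda$ one must either restrict the statement or deform the Stirling coefficient is right, and the paper supplies no such fix. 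Two minor discrepancies you need not repair but should be aware of: \eqref{eq117} is missing its equality sign as printed, and the argument of $F_{m}^{(s)}$ in \eqref{eq118} should be $x\ln c$ to match the derivation.
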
 

\begin{proof} Proving relations \eqref{eq115}-\eqref{eq117} makes use of the definition of Stirling numbers of the second kind in \eqref{eq14}. To prove \eqref{eq115}, using the generalized Binomial Theorem, \eqref{eq711} may be written as
$$\sum_{n=0}^{\infty}\mathcal{G}_n^{(k,\alpha)}(x;\lambda,a,b,c)\frac{t^n}{n!}=\left(\frac{Li_{k}(1-(ab)^{-2t})}{a^{-t}+\lambda b^t}\right)^{\alpha}\sum_{m=0}^{\infty}\binom{x+m-1}{m} (1-e^{-t\ln c})^{m}$$
\begin{align*}
&=\sum_{m=0}^{\infty}(x)^{(m)} \frac{(e^{t\ln c}-1)^{m}}{m!}\left(\frac{Li_{k}(1-(ab)^{-2t})}{a^{-t}+\lambda b^t}\right)^{\alpha}e^{-mt\ln c}\\
&=\sum_{m=0}^{\infty}(x)^{(m)} \left(\ \sum_{n=0}^{\infty} \begin{Bmatrix}n\\m\end{Bmatrix}\frac{(t\ln c)^n}{n!}\right)\ \left(\ \sum_{n=0}^{\infty} \mathcal{G}_{n}^{(k,\alpha)}(-m\ln c;\lambda,a,b)\frac{t^n}{n!}\right)\ \\
&=\sum_{m=0}^{\infty}(x)^{(m)}\sum_{n=0}^{\infty}\sum_{l=0}^{n}\begin{Bmatrix}l\\m\end{Bmatrix}(\ln c)^l\frac{t^{l}}{l!} \mathcal{G}_{n-l}^{(k,\alpha)}(-m\ln c;\lambda,a,b)\frac{t^{n-l}}{(n-l)!}\\
&=\sum_{n=0}^{\infty} \bigg\{ \sum_{m=0}^{\infty} \sum_{l=m}^{n} \begin{Bmatrix}l\\m\end{Bmatrix} \binom{n}{l}(\ln c)^l\mathcal{G}_{n-l}^{(k,\alpha)}(-m\ln c;\lambda,a,b)(x)^{(m)} \bigg\} \frac{t^{n}}{n!}.  
\end{align*}

\noindent Comparing coefficients completes the proof of \eqref{eq115}. To prove identity \eqref{eq116}, \eqref{eq711} may be written as 
\begin{align*}
		\sum_{n=0}^{\infty}\mathcal{G}_n^{(k,\alpha)}(x;\lambda,a,b,c)\frac{t^n}{n!}&=\left(\frac{Li_{k}(1-(ab)^{-2t})}{a^{-t}+\lambda b^t}\right)^{\alpha}\sum_{m=0}^{\infty}\binom{x}{m} (e^{t\ln c}-1)^{m}\\
&=\sum_{m=0}^{\infty}(x)_{m} \frac{(e^{t\ln c}-1)^{m}}{m!}\left(\frac{Li_{k}(1-(ab)^{-2t})}{a^{-t}+\lambda b^t}\right)^{\alpha}\\
&=\sum_{m=0}^{\infty}(x)_{m} \left(\ \sum_{n=0}^{\infty} \begin{Bmatrix}n\\m\end{Bmatrix}\frac{(t\ln c)^n}{n!}\right)\ \left(\ \sum_{n=0}^{\infty} \mathcal{G}_{n}^{(k,\alpha)}(0;\lambda,a,b)\frac{t^n}{n!}\right)\ \\
&=\sum_{m=0}^{\infty}(x)_{m}\sum_{n=0}^{\infty}\sum_{l=0}^{n}\begin{Bmatrix}l\\m\end{Bmatrix}(\ln c)^l\frac{t^{l}}{l!} \mathcal{G}_{n-l}^{(k,\alpha)}(\lambda,a,b)\frac{t^{n-l}}{(n-l)!}\\
&=\sum_{n=0}^{\infty} \bigg\{ \sum_{m=0}^{\infty} \sum_{l=m}^{n} \begin{Bmatrix}l\\m\end{Bmatrix} \binom{n}{l}(\ln c)^l\mathcal{G}_{n-l}^{(k,\alpha)}(\lambda,a,b)(x)_{m} \bigg\} \frac{t^{n}}{n!}.  
\end{align*}

\noindent Again, comparing coefficients completes the proof of \eqref{eq116}. To prove relation \eqref{eq117}, using \eqref{ApostolBernoulli}, \eqref{eq711} may be expressed as
$$\sum_{n=0}^{\infty}\mathcal{G}_n^{(k,\alpha)}(x;\lambda,a,b,c)\frac{t^n}{n!}=\left(\frac{(e^t-1)^s}{s!}\right)\left(\frac{t^se^{xt\ln c}}{(e^t-1)^s}\right)\left(\frac{Li_{k}(1-(ab)^{-2t})}{a^{-t}+\lambda b^t}\right)^{\alpha}\frac{s!}{t^s}\qquad\qquad\qquad$$
\begin{align*}
&=\left(\sum_{n=0}^{\infty}\rsts{n+s}{s}\frac{t^{n+s}}{(n+s)!}\right)\left(\sum_{m=0}^{\infty}B_m^{(s)}(x\ln c;\lambda)\frac{t^{m}}{m!}\right)\left(\sum_{n=0}^{\infty}\mathcal{G}_n^{(k,\alpha)}(0;\lambda,a,b)\frac{t^{m}}{m!}\right)\frac{s!}{t^s}\\
&=\left(\sum_{n=0}^{\infty}\rsts{n+s}{s}\frac{t^{n+s}}{(n+s)!}\right)\left(\sum_{n=0}^{\infty}\sum_{m=0}^{n}\binom{n}{m}B_m^{(s)}(x\ln c;\lambda)\mathcal{G}_{n-m}^{(k,\alpha)}(\lambda,a,b)\frac{t^{n}}{n!}\right)\frac{s!}{t^s}\\
&=\left(\sum_{n=0}^{\infty}\sum_{l=0}^{n}\rsts{l+s}{s}\frac{t^{l+s}}{(l+s)!}\sum_{m=0}^{n-l}\binom{n-l}{m}B_m^{(s)}(x\ln c;\lambda)\mathcal{G}_{n-l-m}^{(k,\alpha)}(\lambda,a,b)\frac{t^{n-l}}{(n-l)!}\right)\frac{s!}{t^s}.
\end{align*}
This can further be written as
\begin{align*}
&\sum_{n=0}^{\infty}\mathcal{G}_n^{(k,\alpha)}(x;\lambda,a,b,c)\frac{t^n}{n!}\\
&=\left(\sum_{l=0}^{\infty}\sum_{n=l}^{\infty}\sum_{m=0}^{n-l}\rsts{l+s}{s}\frac{l!s!}{(l+s)!}\binom{n-l}{m}B_m^{(s)}(x\ln c;\lambda)\mathcal{G}_{n-l-m}^{(k,\alpha)}(\lambda,a,b)\frac{n!}{(n-l)!l!}\frac{t^{n}}{n!}\right)\\
&=\sum_{n=0}^{\infty}\left(\sum_{l=0}^{n}\sum_{m=0}^{n-l}\binom{n}{l}\rsts{l+s}{s}\frac{\binom{n-l}{m}}{\binom{l+s}{s}}B_m^{(s)}(x\ln c;\lambda)\mathcal{G}_{n-l-m}^{(k,\alpha)}(\lambda,a,b)\right)\frac{t^{n}}{n!}.
\end{align*}
Comparing the coefficients of $\frac{t^{n}}{n!}$ gives \eqref{eq117}. To prove relation \eqref{eq118}, \eqref{eq711} may be expressed as
$$\sum_{n=0}^{\infty}\mathcal{G}_n^{(k,\alpha)}(x;\lambda,a,b,c)\frac{t^n}{n!}=\left(\frac{(1-\mu)^s}{(e^t-\mu)^s}e^{xt\ln c}\right)\left(\frac{(e^t-\mu)^s}{(1-\mu)^s}\right)\left(\frac{Li_{k}(1-(ab)^{-2t})}{a^{-t}+\lambda b^t}\right)^{\alpha}\qquad\qquad\qquad$$
\begin{align*}
&=\frac{1}{(1-\mu)^s}\left(\sum_{n=0}^{\infty}F_n^{(s)}(x\ln c;\mu)\frac{t^{n}}{n!}\right)\left(\sum_{j=0}^{s}\binom{s}{j}(-\mu)^{s-j}\left(\frac{Li_{k}(1-(ab)^{-2t})}{a^{-t}+\lambda b^t}\right)^{\alpha}e^{jt}\right)\\
&=\frac{1}{(1-\mu)^s}\sum_{j=0}^{s}\binom{s}{j}(-\mu)^{s-j}\left(\sum_{n=0}^{\infty}F_n^{(s)}(x\ln c;\mu)\frac{t^{n}}{n!}\right)\left(
\sum_{n=0}^{\infty}\mathcal{G}_n^{(k,\alpha)}(x;\lambda,a,b)\frac{t^n}{n!}\right)\\
&=\frac{1}{(1-\mu)^s}\sum_{j=0}^{s}\binom{s}{j}(-\mu)^{s-j}\sum_{n=0}^{\infty}\left(\sum_{m=0}^{n}\binom{n}{m}\mathcal{G}_{n-m}^{(k,\alpha)}(x;\lambda,a,b)F_m^{(s)}(x\ln c;\mu)\right)\frac{t^{n}}{n!}\\
&=\sum_{n=0}^{\infty}\left(\sum_{m=0}^{n}\frac{\binom{n}{m}}{(1-\mu)^s}\sum_{j=0}^{s}\binom{s}{j}(-\mu)^{s-j}\mathcal{G}_{n-m}^{(k,\alpha)}(x;\lambda,a,b)F_m^{(s)}(x\ln c;\mu)\right)\frac{t^{n}}{n!}.
\end{align*}
Comparing the coefficients of $\frac{t^{n}}{n!}$ gives \eqref{eq118}.
\end{proof}

\section{Symmetrized Generalization}

\begin{deftn}\label{defn5}
For $m, n\ge0$, we define the symmetrized generalization of multi poly-Genocchi polynomials with parameters $a$, $b$ and $c$ as follows,
\begin{equation}\label{symgen}
\mathcal{S}^{(m,\alpha)}_n(x,y;\lambda,a,b,c)=\sum_{k=0}^m\binom{m}{k}\frac{\mathcal{G}^{(-k,\alpha)}_n(x;\lambda,a,b,c)}{(\ln a+\ln b)^n}\left(\frac{y\ln c+\alpha\ln a}{\ln a +\ln b}\right)^{m-k}.
\end{equation}
\end{deftn}

\smallskip
The following theorem contains the double generating function for $\mathcal{S}^{(m)}_n(x,y;a,b,c)$.
\begin{thrm}\label{thmm4}
For $n,m\ge0$, we have
\begin{equation}\label{eqnnnew6}
\sum_{n=0}^{\infty}\sum_{m=0}^{\infty}\mathcal{S}^{(m,\alpha)}_n(x,y;\lambda,a,b,c)\frac{t^n}{n!}\frac{u^m}{m!}=\frac{e^{\left(\frac{y\ln c+\alpha\ln a}{\ln a +\ln b}\right)u}e^{\left(\frac{x \ln c + \alpha\ln a}{\ln a + \ln b}\right)t}e^{2t}}{(1+\lambda e^{t})(e^{2t}-e^{2t+u}+e^u)}.
\end{equation}
\begin{proof}
$$\sum_{n=0}^{\infty}\sum_{m=0}^{\infty}\mathcal{S}^{(m,\alpha)}_n(x,y;\lambda,a,b,c)\frac{t^n}{n!}\frac{u^m}{m!}\qquad\qquad\qquad\qquad\qquad\qquad\qquad\qquad\qquad\qquad\qquad\qquad\qquad\qquad\qquad\qquad$$
\begin{eqnarray*}
&=&\sum_{n=0}^{\infty}\sum_{m=0}^{\infty}\sum_{k=0}^m\frac{\mathcal{G}^{(-k,\alpha)}_n(x;\lambda,a,b,c)}{(\ln a+\ln b)^n}\left(\frac{y\ln c+\alpha\ln a}{\ln a +\ln b}\right)^{m-k}\frac{t^n}{n!}\frac{u^m}{k!(m-k)!}\\
&=&\sum_{n=0}^{\infty}\sum_{k=0}^{\infty}\sum_{m=k}^{\infty}\frac{\mathcal{G}^{(-k,\alpha)}_n(x;\lambda,a,b,c)}{(\ln a+\ln b)^n}\left(\frac{y\ln c+\alpha\ln a}{\ln a +\ln b}\right)^{m-k}\frac{t^n}{n!}\frac{u^m}{k!(m-k)!}\\
&=&\sum_{n=0}^{\infty}\sum_{k=0}^{\infty}\frac{\mathcal{G}^{(-k,\alpha)}_n(x;\lambda,a,b,c)}{(\ln a+\ln b)^n}\frac{t^n}{n!}\frac{u^k}{k!}\sum_{l=0}^{\infty}\left(\frac{y\ln c+\alpha\ln a}{\ln a +\ln b}\right)^{l}\frac{u^l}{l!}\\
&=&e^{\left(\frac{y\ln c+\alpha\ln a}{\ln a +\ln b}\right)u}\sum_{k=0}^{\infty}\sum_{n=0}^{\infty}\frac{\mathcal{G}^{(-k,\alpha)}_n(\lambda,a,b,c)}{(\ln a+\ln b)^n}\frac{t^n}{n!}\frac{u^k}{k!}.
\end{eqnarray*}
Applying \eqref{eq10} yields
\begin{eqnarray*}
\sum_{n=0}^{\infty}\sum_{m=0}^{\infty}\mathcal{S}^{(m,\alpha)}_n(x,y;\lambda,a,b,c)\frac{t^n}{n!}\frac{u^m}{m!}&=&e^{\left(\frac{y\ln c+\alpha\ln a}{\ln a +\ln b}\right)u}\sum_{k=0}^{\infty}\sum_{n=0}^{\infty}\mathcal{G}_n^{(-k,\alpha)} \left(\ \frac{x \ln c + \alpha\ln a}{\ln a + \ln b};\lambda \right)\frac{t^n}{n!}\frac{u^k}{k!}.
\end{eqnarray*}
Now, using \eqref{ApostolGenoPoly}, we obtain
\begin{eqnarray*}
\sum_{n=0}^{\infty}\sum_{m=0}^{\infty}\mathcal{S}^{(m,\alpha)}_n(x,y;\lambda,a,b,c)\frac{t^n}{n!}\frac{u^m}{m!}
&=&e^{\left(\frac{y\ln c+\alpha\ln a}{\ln a +\ln b}\right)u}e^{\left(\frac{x \ln c + \alpha\ln a}{\ln a + \ln b}\right)t}\sum_{k=0}^{\infty}\frac{L_{i_{(-k)}}(1-e^{-2t})}{1+\lambda e^{t}}\frac{u^k}{k!}\\
&=&\frac{e^{\left(\frac{y\ln c+\alpha\ln a}{\ln a +\ln b}\right)u}e^{\left(\frac{x \ln c + \alpha\ln a}{\ln a + \ln b}\right)t}}{1+\lambda e^{t}}\sum_{k=0}^{\infty}L_{i_{(-k)}}(1-e^{-2t})\frac{u^k}{k!}.
\end{eqnarray*}
Employing the definition of polylogarithm yields
\begin{eqnarray*}
\sum_{n=0}^{\infty}\sum_{m=0}^{\infty}\mathcal{S}^{(m,\alpha)}_n(x,y;\lambda,a,b,c)\frac{t^n}{n!}\frac{u^m}{m!}&=&\frac{e^{\left(\frac{y\ln c+\alpha\ln a}{\ln a +\ln b}\right)u}e^{\left(\frac{x \ln c + \alpha\ln a}{\ln a + \ln b}\right)t}}{1+\lambda e^{t}}\sum_{k=0}^{\infty}\sum_{m=0}^{\infty}\frac{\left(1-e^{-2t}\right)^m}{m^{-k}}\frac{u^k}{k!}\\
&=&\frac{e^{\left(\frac{y\ln c+\alpha\ln a}{\ln a +\ln b}\right)u}e^{\left(\frac{x \ln c + \alpha\ln a}{\ln a + \ln b}\right)t}}{(1+\lambda e^{t})(1-((1-e^{-2t})e^u))}\\
&=&\frac{e^{\left(\frac{y\ln c+\alpha\ln a}{\ln a +\ln b}\right)u}e^{\left(\frac{x \ln c + \alpha\ln a}{\ln a + \ln b}\right)t}e^{2t}}{(1+\lambda e^{t})(e^{2t}-e^{2t+u}+e^u)}.
\end{eqnarray*}
\end{proof}
\end{thrm}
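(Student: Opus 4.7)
The plan is to substitute the definition \eqref{symgen} into the left-hand side of \eqref{eqnnnew6} and peel off the layers one at a time. First I would swap the order of summation so that $k$ (currently bounded above by $m$) becomes outermost, using $\binom{m}{k}=m!/(k!(m-k)!)$ and $u^m=u^k u^{m-k}$. After reindexing the inner sum by $l=m-k$, the $l$-sum is the exponential series producing the factor $e^{((y\ln c+\alpha\ln a)/(\ln a+\ln b))u}$, which pulls out cleanly and leaves a double sum only in $(n,k)$.

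Next I would pass from parameters $a,b,c$ to the pure Apostol-type setting via Theorem \ref{thm33} applied with $k$ replaced by $-k$. Specifically, equation \eqref{eq10} gives
\[
\frac{\mathcal{G}^{(-k,\alpha)}_n(x;\lambda,a,b,c)}{(\ln a+\ln b)^n}=\mathcal{G}^{(-k,\alpha)}_n\!\left(\frac{x\ln c+\alpha\ln a}{\ln a+\ln b};\lambda\right),
\]
so the $n$-sum collapses via the generating function \eqref{ApostolGenoPoly} into $\bigl(\mathrm{Li}_{-k}(1-e^{-2t})/(1+\lambda e^t)\bigr)^{\alpha}e^{((x\ln c+\alpha\ln a)/(\ln a+\ln b))t}$. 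This reduces the double sum to a single sum over $k$, with all the $t$- and $y$-exponentials factored out explicitly.

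The remaining task is to evaluate $\sum_{k=0}^{\infty}\mathrm{Li}_{-k}(1-e^{-2t})\,u^k/k!$ (taking $\alpha=1$ at the outermost level, as the stated identity has only one power of $(1+\lambda e^t)$ in the denominator). The trick is to use the series definition \eqref{polylog}: for negative index we have $\mathrm{Li}_{-k}(z)=\sum_{r=1}^{\infty}r^k z^r$, so interchanging the $k$- and $r$-sums turns the $k$-sum into $\sum_{k=0}^{\infty}(ru)^k/k!=e^{ru}$, and the remaining $r$-sum is the geometric series $\sum_{r=1}^{\infty}(ze^u)^r=ze^u/(1-ze^u)$ with $z=1-e^{-2t}$. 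Clearing the factor of $e^{-2t}$ by multiplying numerator and denominator by $e^{2t}$ produces exactly the denominator $e^{2t}-e^{2t+u}+e^u$ in \eqref{eqnnnew6}.

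The main obstacle is this last step: one must justify the interchange of the two infinite sums (by absolute convergence of $\sum_{k,r}r^k(ze^u)^r/k!$ for $|ze^u|<1$, which holds in a neighborhood of $t=u=0$) and then recognize the closed form of the resulting geometric series. Once that identification is in place, the rest of the proof is routine bookkeeping, matching the exponential prefactors in $t$ and $u$ produced by Theorem \ref{thm33} against the target expression.
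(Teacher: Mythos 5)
Your outline follows the paper's own proof step for step: substitute \eqref{symgen}, swap the $k$- and $m$-sums, reindex by $l=m-k$ to peel off the factor $e^{\left(\frac{y\ln c+\alpha\ln a}{\ln a+\ln b}\right)u}$, pass to $\mathcal{G}_n^{(-k,\alpha)}(\cdot\,;\lambda)$ via \eqref{eq10}, collapse the $n$-sum with \eqref{ApostolGenoPoly}, and finish by summing the polylogarithms of negative order. You are also right to flag the $\alpha$ issue: the right-hand side of \eqref{eqnnnew6} carries only a single power of $(1+\lambda e^t)$, so the identity as stated requires $\alpha=1$ (the paper's proof silently drops the exponent $\alpha$ at the same step).

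There is, however, a genuine discrepancy in your last step that you assert rather than verify. You take the standard convention $\mathrm{Li}_{-k}(z)=\sum_{r\ge 1}r^k z^r$, which after interchanging sums gives the geometric series starting at $r=1$, namely $\sum_{r\ge1}(ze^u)^r=\frac{ze^u}{1-ze^u}$ with $z=1-e^{-2t}$. Multiplying through by $e^{2t}$ then yields numerator $(e^{2t}-1)e^u$, \emph{not} the numerator $e^{2t}$ appearing in \eqref{eqnnnew6}; the denominators agree but the numerators do not, so your computation does not land on the stated formula. The paper instead sums the geometric series from $m=0$ (consistent with its literal definition \eqref{polylog}, which starts at $n=0$, under the convention $0^0=1$), obtaining $\frac{1}{1-ze^u}$ and hence the numerator $e^{2t}$. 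You need to either adopt that convention explicitly or note that, with the standard polylogarithm, the right-hand side of \eqref{eqnnnew6} should read $(e^{2t}-1)e^u$ in place of $e^{2t}$ in the numerator; as written, your proposal claims a match it does not actually produce. Your remark about justifying the interchange of the $k$- and $r$-sums by absolute convergence near $t=u=0$ is correct and is a point the paper glosses over.
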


\smallskip
The Apostol-type poly-Genocchi polynomials discussed above will be referred to as type 1 Apostol-poly-Genocchi polynomials. Type 2 of these polynomials are introduced in the next section.

\section{Type 2 Higher Order Apostol-Poly-Genocchi Polynomials}
In this section, we will consider another variation of Genocchi polynomials using the concept of polyexponential function \cite{ref45-1} defined by
\begin{equation}\label{polyexpo}
e_k(z)=\sum_{m=1}^{\infty}\frac{z^m}{(m-1)!m^k}.
\end{equation}
Note that when $k=1$, $e_1(z)=e^z-1$. Hence, if $z=\log (1+2t)$, 
$$e_1(z)=e_1(\log (1+2t))=e^{\log (1+2t)}-1=2t.$$

\begin{deftn}\rm The type 2 Apostol-poly-Genocchi polynomials of higher order with parameters $a, b$ and $c$, denoted by $\mathcal{G}_{n,2}^{(k)}(x; \lambda, a, b, c)$, are defined as follows,
\begin{equation}\label{eq811-1}
\sum_{n=0}^{\infty}\mathcal{G}_{n,2}^{(k,\alpha)}(x; \lambda, a, b, c)\frac{t^n}{n!}=\left(\frac{e_{k}(\log(1+2t\ln ab))}{a^{-t}+\lambda b^t}\right)^{\alpha}c^{xt},\;\;\; |t| < \frac{\sqrt{(\ln \lambda)^2+\pi^2}}{|\ln a + \ln b|}.
\end{equation}
\end{deftn}

\bigskip
The following are some special cases of $\mathcal{G}_{n,2}^{(k,\alpha)}(x; \lambda, a, b, c)$:
\begin{enumerate}
\item When $x=0$, we use $\mathcal{G}_{n,2}^{(k,\alpha)}(\lambda, a, b)$ to denote $\mathcal{G}_{n,2}^{(k,\alpha)}(0; \lambda, a, b, c)$, the type 2 Apostol-poly-Genocchi numbers with parameters $a, b$. That is,
\begin{equation}\label{eq811-0}
\sum_{n=0}^{\infty}\mathcal{G}_{n,2}^{(k,\alpha)}(\lambda, a, b)\frac{t^n}{n!}=\left(\frac{e_{k}(\log(1+2t\ln ab))}{a^{-t}+\lambda b^t}\right)^{\alpha}.
\end{equation}
\item When $a=1, b=c=e$, \eqref{eq811-1} yields
\begin{equation}\label{eq811-00}
\sum_{n=0}^{\infty}\mathcal{G}_{n,2}^{(k,\alpha)}(x; \lambda)\frac{t^n}{n!}=\left(\frac{e_{k}(\log(1+2t))}{1+\lambda e^t}\right)^{\alpha}e^{xt},
\end{equation}
where the polynomials $\mathcal{G}_{n,2}^{(k,\alpha)}(x; \lambda)=\mathcal{G}_{n,2}^{(k,\alpha)}(x; \lambda, 1, e, e)$ are called the type 2 Apostol-poly-Genocchi polynomials.
\item When $k=1$, \eqref{eq811-1} gives
\begin{equation}\label{eq811-2}
\sum_{n=0}^{\infty}\mathcal{G}_{n,2}^{(\alpha)}(x; \lambda, a, b, c)\frac{t^n}{n!}=\left(\frac{2t\ln ab}{a^{-t}+\lambda b^t}\right)^{\alpha}c^{xt},
\end{equation}
where the polynomials $\mathcal{G}_{n,2}^{(\alpha)}(x; \lambda, a, b, c)=\mathcal{G}_{n,2}^{(1,\alpha)}(x; \lambda, a, b, c)$ are called the type 2 Apostol-Genocchi polynomials with parameters $a, b$ and $c$, which are related to the type 1 Apostol-Genocchi polynomials with parameters $a, b$ and $c$ as follows
$$\mathcal{G}_{n,2}^{(1,\alpha)}(x; \lambda, a, b, c)=\frac{\mathcal{G}_{n}^{(1,\alpha)}(x; \lambda, a, b, c)}{\ln ab}.$$
\item When $a=1, b=c=e$, \eqref{eq811-2} yields
\begin{equation}\label{eq811-3}
\sum_{n=0}^{\infty}\mathcal{G}_{n,2}^{(1,\alpha)}(x; \lambda, 1, e, e)\frac{t^n}{n!}=\left(\frac{2t}{1+\lambda e^t}\right)^{\alpha}e^{xt},
\end{equation}
where the polynomials $\mathcal{G}_{n,2}^{(1,\alpha)}(x; \lambda, 1, e, e)=\mathcal{G}_{n}^{(\alpha)}(x; \lambda)$ are the type 2 Apostol-Genocchi polynomials in \eqref{eqn2}.
Furthermore, when $\alpha=1$,
\begin{equation}\label{eq811-33}
\sum_{n=0}^{\infty}\mathcal{G}_{n,2}^{(1)}(x; \lambda)\frac{t^n}{n!}=\frac{2t}{1+\lambda e^t}e^{xt},
\end{equation}
where $\mathcal{G}_{n,2}(x; \lambda)=\mathcal{G}_{n,2}^{(1)}(x; \lambda)$, the type 2 Apostol-Genocchi polynomials.
\end{enumerate}

\smallskip
Rewrite \eqref{eq811-1} as follows
\begin{align*}
\sum_{n=0}^{\infty} \mathcal{G}_{n,2}^{(k,\alpha)}(x;\lambda, a, b, c)\frac{t^n}{n!} &=\left(\frac{e_{k}(\log(1+2t\ln ab))}{a^{-t}(1+\lambda (ab)^t)}\right)^{\alpha}e^{xt \ln c} \\   
&= e^{\frac{x \ln c +\alpha\ln a}{\ln ab}t \ln ab} \left(\frac{e_{k}(\log(1+2t\ln ab))}{1+\lambda e^{t \ln ab}}\right)  \\ 
&=\sum_{n=0}^{\infty} (\ln a + \ln b)^{n} \mathcal{G}_{n,2}^{(k,\alpha)} \left(\ \frac {x \ln c + \alpha\ln a}{\ln a + \ln b};\lambda  \right)\ \frac {t^n}{n!}, 
\end{align*}
 and comparing the coefficients yield the following theorem.

\begin{thrm}\label{type2ApostolPG}
The type 2 Apostol-poly-Genocchi polynomials with parameters $a, b$ and $c$ satisfies the relation,
\begin{equation}\label{ident0}
\mathcal{G}_{n,2}^{(k,\alpha)}(x;\lambda, a, b, c)=(\ln a + \ln b)^{n} \mathcal{G}_{n,2}^{(k,\alpha)}\left(\ \frac {x \ln c + \alpha\ln a}{\ln a + \ln b};\lambda  \right).
\end{equation}
\end{thrm}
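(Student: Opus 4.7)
The plan is to mimic the proof of Theorem \ref{thm33}, since the statement for the type 2 polynomials has exactly the same shape as the identity (\ref{eq10}) for the type 1 polynomials, with the polylogarithm $Li_k(1-(ab)^{-2t})$ replaced by the polyexponential expression $e_k(\log(1+2t\ln ab))$. That replacement does not affect the structural part of the argument, because the proof of (\ref{eq10}) only manipulates the denominator $a^{-t}+\lambda b^t$ and the factor $c^{xt}$.

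First I would start from the defining generating function (\ref{eq811-1}) and factor $a^{-t\alpha}$ out of the denominator to write
\[
\bigl(a^{-t}+\lambda b^t\bigr)^{\alpha}=a^{-t\alpha}\bigl(1+\lambda e^{t\ln ab}\bigr)^{\alpha}.
\]
This moves $a^{t\alpha}$ to the numerator, where it combines with $c^{xt}=e^{xt\ln c}$ into a single exponential $e^{(\alpha\ln a+x\ln c)t}$. Next I would rewrite this exponential in the form $e^{y\cdot t\ln ab}$ with $y=\dfrac{x\ln c+\alpha\ln a}{\ln a+\ln b}$, so that the right-hand side becomes
\[
\left(\frac{e_{k}\bigl(\log(1+2t\ln ab)\bigr)}{1+\lambda e^{t\ln ab}}\right)^{\alpha}e^{y\cdot t\ln ab}.
\]

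The third step is to recognize this as the generating function (\ref{eq811-00}) for the type 2 Apostol-poly-Genocchi polynomials $\mathcal{G}_{n,2}^{(k,\alpha)}(y;\lambda)$ evaluated at the formal variable $t\ln ab$ in place of $t$. This yields
\[
\left(\frac{e_{k}\bigl(\log(1+2t\ln ab)\bigr)}{1+\lambda e^{t\ln ab}}\right)^{\alpha}e^{y\cdot t\ln ab}
=\sum_{n=0}^{\infty}\mathcal{G}_{n,2}^{(k,\alpha)}(y;\lambda)\,\frac{(t\ln ab)^{n}}{n!}
=\sum_{n=0}^{\infty}(\ln a+\ln b)^{n}\mathcal{G}_{n,2}^{(k,\alpha)}(y;\lambda)\,\frac{t^{n}}{n!}.
\]
Equating this with the left-hand side $\sum_{n\ge 0}\mathcal{G}_{n,2}^{(k,\alpha)}(x;\lambda,a,b,c)\,t^{n}/n!$ of (\ref{eq811-1}) and comparing coefficients of $t^{n}/n!$ delivers (\ref{ident0}).

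There is no real obstacle here: the only subtle point is verifying that substituting $t\ln ab$ for $t$ in (\ref{eq811-00}) really reproduces the factor $e_{k}\bigl(\log(1+2t\ln ab)\bigr)$ arising from the rescaled numerator, which is immediate since the definition of $e_{k}$ in (\ref{polyexpo}) is used only through its value at $\log(1+2\cdot(\text{variable}))$. Hence the proof is a one-step algebraic rearrangement of the generating function followed by coefficient comparison, parallel to the argument already carried out for Theorem \ref{thm33}.
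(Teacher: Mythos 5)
Your proposal is correct and follows essentially the same route as the paper: factor $a^{-t\alpha}$ out of the denominator, absorb it with $c^{xt}$ into a single exponential $e^{y\,t\ln ab}$ with $y=\frac{x\ln c+\alpha\ln a}{\ln a+\ln b}$, recognize the generating function \eqref{eq811-00} evaluated at $t\ln ab$, and compare coefficients of $t^n/n!$. The only difference is cosmetic --- you make the substitution $t\mapsto t\ln ab$ explicit, whereas the paper performs it silently (and in fact drops the exponent $\alpha$ on the fraction in its middle display, a typo your version avoids).
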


\smallskip
When $k=1$, \eqref{ident0} reduces to the following relation
\begin{equation}\label{ident1}
\mathcal{G}_{n,2}^{(\alpha)}(x;\lambda, a, b, c)=(\ln a + \ln b)^{n-j} \mathcal{G}_{n,2}^{(\alpha)}\left(\ \frac {x \ln c + \alpha\ln a}{\ln a + \ln b};\lambda  \right).
\end{equation}

The next theorem contains an identity that relates the type 2 Apostol-poly-Genocchi polynomials of higher order with parameters $a, b$ and $c$ to Stirling numbers of the first kind $\rstf{n}{m}$ defined by
\begin{equation}\label{SNFK}
\sum_{n=m}^{\infty}\rstf{n}{m}\frac{t^n}{n!}=\frac{(\log(1+t))^m}{m!}.
\end{equation}

\begin{thrm}\label{type2ApostolPG-1}
The type 2 Apostol-poly-Genocchi polynomials of higher order with parameters $a, b$ and $c$ satisfies the relation,
\begin{equation}\label{ident2}
\mathcal{G}_{n,2}^{(k,\alpha)}(x; \lambda, a, b, c)=\sum_{j=0}^{n}\binom{n}{j}(\ln a + \ln b)^{n-j} \mathcal{G}_{n-j,2}^{(\alpha)}\left(\ \frac {x \ln c + \alpha\ln a}{\ln a + \ln b};\lambda  \right)d_j,
\end{equation}
where
\begin{equation*}
d_j=\sum_{n_1+n_2+\ldots+n_{\alpha}=j}\prod_{i=1}^{\alpha}c_{n_i}\binom{j}{n_1,n_2,\ldots,n_{\alpha}}\;\;\mbox{and}\;\;c_j=\sum_{m=0}^{j}\frac{(2\ln ab)^j\rstf{j+1}{m+1}}{(j+1)(m+1)^{k-1}}.
\end{equation*}
\end{thrm}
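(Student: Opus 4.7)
The plan is to mimic the proof strategy of Theorem \ref{type1ApostolPG-1}, with the polyexponential function $e_k$ and Stirling numbers of the first kind playing the roles previously occupied by the polylogarithm $\text{Li}_k$ and Stirling numbers of the second kind. The goal is to separate a factor of the type~2 Apostol-Genocchi generating function $\left(\frac{2t\ln ab}{a^{-t}+\lambda b^t}\right)^{\alpha}c^{xt}$ out of the right-hand side of \eqref{eq811-1}, so that the remaining series expansion supplies exactly the coefficients $c_j$ and $d_j$ named in the theorem.

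First, I would expand the numerator in \eqref{eq811-1}. Using the definition \eqref{polyexpo} and shifting the summation index $m\to m+1$, write
$$e_k(\log(1+2t\ln ab)) = \sum_{m=0}^{\infty}\frac{(\log(1+2t\ln ab))^{m+1}}{m!(m+1)^k} = \sum_{m=0}^{\infty}\frac{1}{(m+1)^{k-1}}\cdot\frac{(\log(1+2t\ln ab))^{m+1}}{(m+1)!}.$$
Next, apply the Stirling numbers of the first kind identity \eqref{SNFK} with $t\mapsto 2t\ln ab$ and $m\mapsto m+1$ to obtain
$$e_k(\log(1+2t\ln ab)) = \sum_{m=0}^{\infty}\frac{1}{(m+1)^{k-1}}\sum_{j=m+1}^{\infty}\rstf{j}{m+1}\frac{(2t\ln ab)^j}{j!}.$$
A reindexing $j\to j+1$ (mirroring the corresponding step in the proof of Theorem \ref{type1ApostolPG-1}) pulls out a factor of $2t\ln ab$, leaving
$$e_k(\log(1+2t\ln ab)) = (2t\ln ab)\sum_{j=0}^{\infty}c_j\,\frac{t^j}{j!},$$
with $c_j$ precisely as stated in the theorem statement.

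Then, raising to the $\alpha$-th power and invoking the multinomial expansion \eqref{series}, I get $\left(\sum_j c_j t^j/j!\right)^{\alpha} = \sum_j d_j\,t^j/j!$ with the stated multinomial coefficients $d_j$. Combining with the remaining factor and recognizing from \eqref{eq811-2} that $\left(\frac{2t\ln ab}{a^{-t}+\lambda b^t}\right)^{\alpha}c^{xt}$ is the generating function for $\mathcal{G}_{n,2}^{(\alpha)}(x;\lambda,a,b,c)$, the left-hand side of \eqref{eq811-1} becomes the Cauchy product
$$\left(\sum_{n=0}^{\infty}\mathcal{G}_{n,2}^{(\alpha)}(x;\lambda,a,b,c)\frac{t^n}{n!}\right)\!\left(\sum_{j=0}^{\infty}d_j\frac{t^j}{j!}\right).$$
Comparing coefficients of $t^n/n!$ produces the intermediate identity $\mathcal{G}_{n,2}^{(k,\alpha)}(x;\lambda,a,b,c)=\sum_{j=0}^{n}\binom{n}{j}\mathcal{G}_{n-j,2}^{(\alpha)}(x;\lambda,a,b,c)\,d_j$, and applying Theorem \ref{type2ApostolPG} (in its $k=1$ avatar, i.e.\ relation \eqref{ident1}) to rewrite each $\mathcal{G}_{n-j,2}^{(\alpha)}(x;\lambda,a,b,c)$ as $(\ln a+\ln b)^{n-j}\mathcal{G}_{n-j,2}^{(\alpha)}\!\left(\frac{x\ln c+\alpha\ln a}{\ln a+\ln b};\lambda\right)$ delivers \eqref{ident2}.

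The main obstacle is purely bookkeeping: correctly tracking the two reindexings ($m\to m+1$ and $j\to j+1$), and confirming that the factor $\frac{1}{(m+1)^{k-1}}$ together with the $\frac{1}{(j+1)}$ coming from the $(j+1)!$ denominator produces exactly the $\frac{1}{(j+1)(m+1)^{k-1}}$ in $c_j$. The structural difference from Theorem \ref{type1ApostolPG-1} is that the $(-1)^{m+1}$ sign, which arose there from expanding $(1-e^{2t\ln ab})^{m+1}$, does not appear here because $\log(1+2t\ln ab)$ requires no such sign flip; this is the only substantive change, and the Stirling numbers of the first kind naturally replace those of the second kind throughout.
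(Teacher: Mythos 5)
Your proposal is correct and follows essentially the same route as the paper's own proof: expand $e_k(\log(1+2t\ln ab))$ via the index shift $m\to m+1$, invoke \eqref{SNFK}, reindex to extract the factor $2t\ln ab$, apply \eqref{series} for the $\alpha$-th power, and finish by comparing coefficients and using the scaling relation of Theorem \ref{type2ApostolPG}. Your observation that the $(-1)^{m+1}$ sign from the type~1 case disappears here is also accurate.
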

\begin{proof}
Applying the definition of polyexponential function \eqref{polyexpo}, \eqref{eq811-1} may be written as
\begin{align*}
\sum_{n=0}^{\infty}\mathcal{G}_{n,2}^{(k,\alpha)}(x; \lambda, a, b, c)\frac{t^n}{n!}&=\frac{c^{xt}}{(a^{-t}+\lambda b^t)^{\alpha}}\left(\sum_{m=1}^{\infty}\frac{(\log(1+2t\ln ab))^m}{(m-1)!m^k}\right)^{\alpha}\\
&=\frac{c^{xt}}{(a^{-t}+\lambda b^t)^{\alpha}}\left(\sum_{m=0}^{\infty}\frac{(\log(1+2t\ln ab))^{m+1}}{m!(m+1)^k}\right)^{\alpha}\\
&=\frac{c^{xt}}{(a^{-t}+\lambda b^t)^{\alpha}}\left(\sum_{m=0}^{\infty}\frac{1}{(m+1)^{k-1}}\frac{\log(1+2t\ln ab))^{m+1}}{(m+1)!}\right)^{\alpha}.
\end{align*}
This can further be written, using \eqref{SNFK}, as follows
\begin{align*}
\sum_{n=0}^{\infty}\mathcal{G}_{n,2}^{(k,\alpha)}(x; \lambda, a, b, c)\frac{t^n}{n!}
&=\frac{c^{xt}}{(a^{-t}+\lambda b^t)^{\alpha}}\left(\sum_{m=0}^{\infty}\frac{1}{(m+1)^{k-1}}\sum_{j=m+1}^{\infty}\rstf{j}{m+1}\frac{(2t\ln ab)^j}{j!}\right)^{\alpha}\\
&=c^{xt}\left(\frac{2t\ln ab }{a^{-t}+\lambda b^t}\right)^{\alpha}\left(\sum_{m=0}^{\infty}\sum_{j=m}^{\infty}\frac{\rstf{j+1}{m+1}}{(j+1)(m+1)^{k-1}}\frac{(2t\ln ab)^j}{j!}\right)^{\alpha}.
\end{align*}
Applying \eqref{eq811-2} yields
\begin{align}
\sum_{n=0}^{\infty}\mathcal{G}_{n,2}^{(k,\alpha)}(x; \lambda, a, b, c)\frac{t^n}{n!}=\left(\sum_{n=0}^{\infty}\mathcal{G}_{n,2}^{(\alpha)}(x; \lambda, a, b, c)\frac{t^n}{n!}\right)\left(\sum_{j=0}^{\infty}c_j\frac{t^j}{j!}\right)^{\alpha},\label{eqqq}
\end{align}
where 
$$c_j=\sum_{m=0}^{j}\frac{(2\ln ab)^j\rstf{j+1}{m+1}}{(j+1)(m+1)^{k-1}}.$$
Note that, using \eqref{series}, equation \eqref{eqqq} can be expressed as
\begin{align*}
\sum_{n=0}^{\infty}\mathcal{G}_{n,2}^{(k,\alpha)}(x; \lambda, a, b, c)\frac{t^n}{n!}=\sum_{n=0}^{\infty}\left\{\sum_{j=0}^{n}\binom{n}{j}\mathcal{G}_{n-j,2}^{(\alpha)}(x; \lambda, a, b, c)d_j\right\}\frac{t^n}{n!},
\end{align*}
where 
$$d_j=\sum_{n_1+n_2+\ldots+n_{\alpha}=j}\prod_{i=1}^{\alpha}c_{n_i}\binom{j}{n_1,n_2,\ldots,n_{\alpha}}.$$
This immediately gives \eqref{ident2} by comparing the coefficients and using equation \eqref{ident0}.
\end{proof}

\smallskip
The next theorem shows the relationship between the type 2 Apostol-poly-Genocchi polynomials of higher order with parameters $a$, $b$ and $c$ and the type 2 Apostol-poly-Bernoulli polynomials, which may be defined as 
\begin{equation}\label{ApostolPolyBernoulli-o1}
\left(\frac{e_{k}(\log(1+t))}{\lambda e^{t}-1}\right)^{\alpha}e^{xt}=\sum_{n=0}^{\infty}\mathcal{B}_{n,2}^{(k,\alpha)}(x;\lambda)\frac{x^n}{n!}.
\end{equation}
These polynomials and those in \eqref{ApostolPolyBernoulli} are certain generalizations of Bernoulli-type polynomials and are also worthy for investigation.

\smallskip
\begin{thrm} The type 2 Apostol-poly-Genocchi polynomials of higher order with parameters $a, b, c$ satisfy the relation,
\begin{equation}\label{eq13-o11}
\mathcal{G}_{n,2}^{(k)}(x;\lambda,a,b,c)=\sum_{j=0}^{\alpha}\binom{\alpha}{j}(-1)^j\lambda^{\alpha-j}\mathcal{B}_{n,2}^{(k,\alpha)} \left( \frac {(\alpha-j)\ln b+x\ln c+(2\alpha-j)\ln a}{2(\ln a+\ln b)};\lambda^2 \right) 2^{n}(\ln ab)^{n}. 
\end{equation}
\end{thrm}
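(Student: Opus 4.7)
The plan is to mirror the proof strategy used earlier for the type 1 case in equation \eqref{eq55}, replacing the polylogarithm factor by the polyexponential factor throughout. The key observation is that the algebraic manipulation of the denominator $a^{-t}+\lambda b^t$ does not interact with the numerator of the generating function, so the rationalization step can be recycled verbatim.

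First, I would start from the defining generating function \eqref{eq811-1} and rationalize the denominator by multiplying numerator and denominator by $a^{-t}-\lambda b^t$, obtaining
$$\frac{1}{a^{-t}+\lambda b^t}=\frac{a^{-t}-\lambda b^t}{a^{-2t}-\lambda^2 b^{2t}}=\frac{-(a^{t}-\lambda a^{2t}b^{t})}{\lambda^2(ab)^{2t}-1},$$
where the second equality comes from multiplying top and bottom by $a^{2t}$. Raising to the $\alpha$th power and inserting into \eqref{eq811-1} yields
$$\sum_{n=0}^{\infty}\mathcal{G}_{n,2}^{(k,\alpha)}(x;\lambda,a,b,c)\frac{t^n}{n!}=(-1)^{\alpha}\left(\frac{e_{k}(\log(1+2t\ln ab))}{\lambda^2 e^{2t\ln ab}-1}\right)^{\alpha}\bigl(a^{t}-\lambda a^{2t}b^{t}\bigr)^{\alpha}c^{xt}.$$

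Next I would expand $\bigl(a^{t}-\lambda a^{2t}b^{t}\bigr)^{\alpha}$ via the Binomial Theorem as
$$\sum_{j=0}^{\alpha}\binom{\alpha}{j}(-\lambda)^{\alpha-j}e^{t[(2\alpha-j)\ln a+(\alpha-j)\ln b]},$$
absorb the $(-1)^{\alpha}(-\lambda)^{\alpha-j}=(-1)^j\lambda^{\alpha-j}$ factor, and combine with $c^{xt}=e^{xt\ln c}$ to obtain an exponent of the form
$$\exp\!\left(\frac{(\alpha-j)\ln b+x\ln c+(2\alpha-j)\ln a}{2(\ln a+\ln b)}\cdot 2t\ln ab\right).$$
The crucial observation is that with the substitution $s=2t\ln ab$, each summand now has the shape of the generating function \eqref{ApostolPolyBernoulli-o1} for $\mathcal{B}_{n,2}^{(k,\alpha)}(\,\cdot\,;\lambda^2)$ evaluated in variable $s$, because $\lambda^2 e^{2t\ln ab}-1=\lambda^2 e^s-1$ and $\log(1+2t\ln ab)=\log(1+s)$.

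Applying \eqref{ApostolPolyBernoulli-o1} with parameter $\lambda^2$ and evaluation point $\tfrac{(\alpha-j)\ln b+x\ln c+(2\alpha-j)\ln a}{2(\ln a+\ln b)}$ therefore gives a series in $s^n/n!=(2t\ln ab)^n/n!$, producing the factor $2^{n}(\ln ab)^{n}$. Interchanging the finite $j$-sum with the $n$-sum and comparing coefficients of $t^n/n!$ on both sides yields \eqref{eq13-o11}. The only mildly delicate step is bookkeeping the sign: checking that $(-1)^{\alpha}(-\lambda)^{\alpha-j}=(-1)^j\lambda^{\alpha-j}$ matches the stated coefficient, and making sure the polyexponential factor transforms cleanly under $s=2t\ln ab$. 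Aside from this, the argument is parallel in structure to the proof of \eqref{eq55}, with the polyexponential function $e_k(\log(1+\,\cdot\,))$ simply playing the role previously played by ${\rm Li}_k(1-e^{-(\,\cdot\,)})$.
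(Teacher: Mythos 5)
Your proposal is correct and follows essentially the same route as the paper's own proof: rationalize $a^{-t}+\lambda b^{t}$ by its conjugate, pull out the power of $a^{2t}$ to reach $\lambda^{2}e^{2t\ln ab}-1$ in the denominator, expand the conjugate factor by the Binomial Theorem, and recognize each summand as the generating function \eqref{ApostolPolyBernoulli-o1} in the variable $2t\ln ab$ with parameter $\lambda^{2}$. The sign bookkeeping $(-1)^{\alpha}(-\lambda)^{\alpha-j}=(-1)^{j}\lambda^{\alpha-j}$ and the exponent $(2\alpha-j)\ln a+(\alpha-j)\ln b+x\ln c$ both check out against the paper's computation.
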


\begin{proof} Rewrite equation \eqref{eq811-1} as
\begin{align*}
\sum_{n=0}^{\infty} \mathcal{G}_{n,2}^{(k,\alpha)}(x;\lambda,a,b,c) \frac{t^n}{n!}&=\left(\frac{e_{k}(\log(1+2t\ln ab))}{a^{-t}+\lambda b^{t}}\right)^{\alpha} e^{xt \ln c}\\
&=\left(\frac{e_{k}(\log(1+2t\ln ab))}{a^{-2t}-(\lambda b^{t})^2}\right)^{\alpha}(a^{-t}-\lambda b^{t})^{\alpha} e^{xt \ln c}\\
&=\left(\frac{e_{k}(\log(1+2t\ln ab))}{(1-(\lambda (ab)^{t})^2)}\right)^{\alpha}(e^{-t\ln a }-\lambda e^{t \ln b})^{\alpha}e^{xt \ln c} e^{2t\alpha\ln a}\\
&=\left(\frac{e_{k}(\log(1+2t\ln ab))}{-(\lambda^2 e^{2t\ln (ab)}-1)}\right)^{\alpha}\left(e^{t(-\ln a+(x\ln c/\alpha)+2\ln a)}\right.\\
&\;\;\;\;\;\;\;\;\left.-\lambda e^{t(\ln b+(x\ln c/\alpha)+2\ln a)}\right)^{\alpha}.
\end{align*}
Applying the Binomial Theorem yields
\begin{align*}
&\sum_{n=0}^{\infty} \mathcal{G}_{n,2}^{(k,\alpha)}(x;\lambda,a,b,c) \frac{t^n}{n!}\\
&=\left(\frac{e_{k}(\log(1+2t\ln ab))}{-(\lambda^2 e^{2t\ln (ab)}-1)}\right)^{\alpha}\sum_{j=0}^{\alpha}\binom{\alpha}{j}(-\lambda)^{\alpha-j}e^{jt(\ln a+(x\ln c/\alpha))}e^{(\alpha-j)t(\ln b+(x\ln c/\alpha)+2\ln a)}\\
&=\sum_{j=0}^{\alpha}\binom{\alpha}{j}(-1)^j\lambda^{\alpha-j}\left(\frac{e_{k}(\log(1+2t\ln ab))}{\lambda^2 e^{2t\ln (ab)}-1}\right)^{\alpha}e^{[((\alpha-j)\ln b+\alpha(x\ln c/\alpha)+(2\alpha-j)\ln a)/2\ln ab](2t\ln ab)}.
\end{align*}
Using the definition of type 2 Apostol-poly-Bernoulli polynomials of higher order in \eqref{ApostolPolyBernoulli-o1}, we have
\begin{align*}
&\sum_{n=0}^{\infty} \mathcal{G}_{n,2}^{(k,\alpha)}(x;\lambda,a,b,c) \frac{t^n}{n!}\\
&=\sum_{j=0}^{\alpha}\binom{\alpha}{j}(-1)^j\lambda^{\alpha-j}\left\{\sum_{n=0}^{\infty} \mathcal{B}_{n,2}^{(k,\alpha)} \left( \frac {(\alpha-j)\ln b+x\ln c+(2\alpha-j)\ln a}{2(\ln a+\ln b)};\lambda^2 \right) 2^{n}(\ln ab)^{n}\frac{t^n}{n!}\right\}\\
&=\sum_{n=0}^{\infty}\left\{\sum_{j=0}^{\alpha}\binom{\alpha}{j}(-1)^j\lambda^{\alpha-j}\mathcal{B}_{n,2}^{(k,\alpha)} \left( \frac {(\alpha-j)\ln b+x\ln c+(2\alpha-j)\ln a}{2(\ln a+\ln b)};\lambda^2 \right) 2^{n}(\ln ab)^{n}\right\}\frac{t^n}{n!}.
\end{align*}
Comparing the coefficients of $\frac{t^n}{n!}$ yields \eqref{eq13-o11}.
\end{proof}

\smallskip
\begin{rema}\rm It is left to the reader to prove the following identities which can be done parallel to the proofs of the corresponding identities in sections 3, 4 and 5 for type 1 Apostol-poly-Genocchi polynomials of higher order with parameters $a$, $b$ and $c$:
\begin{align*}
 \mathcal{G}_{n,2}^{(k,\alpha)}(x;\lambda, a, b, c)&=\sum_{i=0}^{n} \binom {n}{i} (\ln c)^{n-i} \mathcal{G}_{i,2}^{(k,\alpha)}(\lambda,a,b) x^{n-i}\\
\mathcal{G}_{n,2}^{(k,\alpha)}(x+1;\lambda,a,b,c)&=\sum_{r=0}^n\binom{n}{r}(\ln c)^r\mathcal{G}_{n-r,2}^{(k,\alpha)}(x;\lambda,a,b,c)\\
 \frac {d}{dx} \mathcal{G}_{n+1,2}^{(k,\alpha)}(x;\lambda, a, b, c)&= (n+1)(\ln c) \mathcal{G}_{n,2}^{(k,\alpha)} (x;\lambda,a,b,c)\\
\mathcal{G}_{n,2}^{(k,\alpha)} (x+y;\lambda,a,b,c)&=\sum_{i=0}^{\infty} \binom{n}{i} (\ln c)^{n-i}\mathcal{G}_{i,2}^{(k,\alpha)} (x;\lambda,a,b,c) y^{n-i}
\end{align*}
\begin{align*}
\mathcal{G}_{n,2}^{(k,\alpha)}(x;\lambda, a, b, c)&=\sum_{m=0}^{\infty} \sum_{l=m}^{n} \begin{Bmatrix}l\\m\end{Bmatrix} \binom{n}{l}(\ln c)^l\mathcal{G}_{n-l,2}^{(k,\alpha)}(-m\ln c;\lambda,a,b)(x)^{(m)}\\
\mathcal{G}_{n,2}^{(k,\alpha)}(x;\lambda, a, b, c)&=\sum_{m=0}^{\infty} \sum_{l=m}^{n} \begin{Bmatrix}l\\m\end{Bmatrix} \binom{n}{l}(\ln c)^l \mathcal{G}_{n-l,2}^{(k,\alpha)}(\lambda,a,b)(x)_{m}\\
\mathcal{G}_{n,2}^{(k,\alpha)}(x;\lambda, a, b)&=\sum_{l=0}^{n}\sum_{m=0}^{n-l}\binom{n}{l}\rsts{l+s}{s}\frac{\binom{n-l}{m}}{\binom{l+s}{s}}\mathcal{G}_{n-l-m}^{(k,\alpha)}(\lambda,a,b)B_m^{(s)}(x\ln c;\lambda)\\
\mathcal{G}_{n,2}^{(k,\alpha)}(x;\lambda, a, b)&=\sum_{m=0}^{\infty} \dfrac{\binom{n}{m}}{(1-\mu)^{s}} \sum_{j=0}^{s} {\binom{s}{j}} (-\mu)^{s-j}\mathcal{G}_{n-m,2}^{(k,\alpha)}(j;\lambda,a,b)F_{m}^{(s)}(x;\mu).
\end{align*}
\end{rema}

\section{Conclusion}
This paper introduced certain variation of poly-Genocchi polynomials, called the Apostol-type poly-Genocchi polynomials of higher order, also known as type 1 Apostol-poly-Genocchi polynomials of higher order, using the concept of polylogarithm and Apostol-type polynomials of higher order with parameters $a$, $b$ and $c$. Some interesting properties and identities of these polynomials were explored parallel to those of the poly-Euler polynomials and poly-Bernoulli polynomials. Using a+ differential identity, the type 1 Apostol-poly-Genocchi polynomials were classified as  Appell polynomials, which, consequently, gave some interesting relations. Moreover, these type 1 Apostol-poly-Genocchi polynomials of higher order were expressed in terms of Stirling numbers of the second kind and Apostol-type poly-Bernoulli polynomials of higher order. Furthermore, the symmetrized generalization of these type 1 Apostol-poly-Genocchi polynomials of higher order was introduced and a kind of double generating function was established. 

\smallskip
The paper was concluded by introducing the type 2 Apostol-poly-Genocchi polynomials of higher order with parameters $a$, $b$ and $c$. Several identities were established, two of which showed the connections of these polynomials with Stirling numbers of the first kind and the type 2 Apostol-type poly-Bernoulli polynomials. One may try to investigate the two types of Apostol-poly-Bernoulli polynomials of higher order defined in \eqref{ApostolPolyBernoulli} and \eqref{ApostolPolyBernoulli-o1}, by establishing more properties and extending them to a more general form by adding three more parameters $a$, $b$ and $c$.

\end{document}